\newtheorem{thm}{Theorem}[section]
\newtheorem{lem}[thm]{Lemma}
\newtheorem{prop}[thm]{Proposition}
\newtheorem{conj}[thm]{Conjecture}
\newtheorem{question}[thm]{Question}
\newtheorem*{claim}{Claim}
\theoremstyle{definition}
\newtheorem{Def}[thm]{Definition}
\crefname{equation}{equation}{equations}
\crefname{thm}{Theorem}{Theorems}
\crefname{section}{Section}{Sections}
\crefname{figure}{Figure}{Figures}
\crefname{prop}{Proposition}{Propositions}
\newcommand\ab[1]{\lvert#1\rvert}
\newcommand{\inv}{^{-1}}
\newcommand\F{\mathcal{F}}
\newcommand\K{\mathcal{K}}
\newcommand\G{\mathcal{G}}
\newcommand\h{\mathcal{H}}
\DeclareMathOperator\ex{ex}
\mathchardef\mhyphen="2D
\newcommand\polyrem{\delta_{\mathrm{poly\mhyphen rem}}}
\newcommand\linrem{\delta_{\mathrm{lin\mhyphen rem}}}
\newcommand\popedge{\delta_{\mathrm{pop\mhyphen edge}}}
\title{Minimum degree and the graph removal lemma}
\author{Jacob Fox\thanks{Department of Mathematics, Stanford University, Stanford, CA 94305, USA. Email: \url{jacobfox@stanford.edu}. Research supported by a Packard Fellowship and by NSF award  DMS-185563.} \and Yuval Wigderson\thanks{Department of Mathematics, Stanford University, Stanford, CA 94305, USA. Email: \url{yuvalwig@stanford.edu}. Research supported by NSF GRFP Grant DGE-1656518.}}
\date{}
\begin{document}
\maketitle
\begin{abstract}
	The clique removal lemma says that for every $r \geq 3$ and $\varepsilon>0$, there exists some $\delta>0$ so that every $n$-vertex graph $G$ with fewer than $\delta n^r$ copies of $K_r$ can be made $K_r$-free by removing at most $\varepsilon n^2$ edges. The dependence of $\delta$ on $\varepsilon$ in this result is notoriously difficult to determine: it is known that $\delta\inv$ must be at least super-polynomial in $\varepsilon\inv$, and that it is at most of tower type in $\log \varepsilon \inv$.

	We prove that if one imposes an appropriate minimum degree condition on $G$, then one can actually take $\delta$ to be a linear function of $\varepsilon$ in the clique removal lemma. Moreover, we determine the threshold for such a minimum degree requirement, showing that above this threshold we have linear bounds, whereas below the threshold the bounds are once again super-polynomial, as in the unrestricted removal lemma.

	We also investigate this question for other graphs besides cliques, and prove some general results about how minimum degree conditions affect the bounds in the graph removal lemma.
\end{abstract}

\section{Introduction}
One of the deepest results in extremal graph theory is the triangle removal lemma of Ruzsa and Szemer\'edi \cite{MR519318}, as well as its extension to the graph removal lemma, proved independently by Alon--Duke--Lefmann--R\"odl--Yuster \cite{MR1251840} and F\"uredi \cite{MR1404036}. Loosely, this result says that if a large graph $G$ contains ``few'' copies of a fixed graph $H$, then it can be made $H$-free by deleting ``few'' edges. The formal statement is as follows.

\begin{thm}\label{thm:graph-removal}
	Let $H$ be a graph on $h$ vertices. For every $\varepsilon>0$, there exists a $\delta>0$ such that the following holds. If $G$ is an $n$-vertex graph with fewer than $\delta n^h$ copies of $H$, then one can remove at most $\varepsilon n^2$ edges from $G$ to make it $H$-free. 
\end{thm}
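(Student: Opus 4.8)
The plan is to prove \cref{thm:graph-removal} via Szemer\'edi's regularity lemma, following the classical argument of Ruzsa--Szemer\'edi and Alon--Duke--Lefmann--R\"odl--Yuster: apply the regularity lemma to $G$, delete a small number of ``useless'' edges to obtain a cleaner graph $G'$, and then show that any copy of $H$ surviving in $G'$ forces $\Omega(n^h)$ copies of $H$ in the original graph $G$. Reading this implication contrapositively then gives the theorem.

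Concretely, I would first fix a regularity parameter $\gamma = \gamma(\varepsilon,h)$ and a density threshold $\eta = \eta(\varepsilon,h)$, both small compared with $\varepsilon$ and $1/h$, with $\gamma$ in turn far smaller than $\eta$. Applying the regularity lemma yields a $\gamma$-regular equitable partition $V_1,\dots,V_k$ of $V(G)$ with $k$ bounded in terms of $\gamma$ alone (and, if we wish, bounded below). Form $G'$ by deleting every edge that (i) lies inside some $V_i$, (ii) joins a pair $(V_i,V_j)$ that is not $\gamma$-regular, or (iii) joins a pair $(V_i,V_j)$ of density less than $\eta$. A routine estimate --- using that there are at most $\gamma\binom{k}{2}$ irregular pairs and that $k$ may be taken large --- shows that at most $\varepsilon n^2$ edges are removed.

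The heart of the matter is to show that $G'$ is $H$-free once $\delta$ is small enough. Suppose not. Since every surviving edge runs between two distinct parts, a copy of $H$ in $G'$ yields an assignment $\pi$ of a part $V_{\pi(u)}$ to each vertex $u$ of $H$ such that, for every edge $uv \in E(H)$, the pair $(V_{\pi(u)},V_{\pi(v)})$ is $\gamma$-regular of density at least $\eta$ (in particular $\pi(u) \neq \pi(v)$). The key step is then a counting/embedding lemma: one embeds the vertices of $H$ into the parts $V_{\pi(u)}$ one at a time, at each stage passing to the subset of the current part consisting of vertices having many neighbours in each previously used part, and invoking $\gamma$-regularity to certify that this subset still comprises a constant proportion of its part. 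Carrying this out shows that $G$ contains at least $c(\eta,h)\,n^h$ copies of $H$ for some $c(\eta,h)>0$. Taking $\delta = c(\eta,h)$ then gives the theorem: if $G$ has fewer than $\delta n^h$ copies of $H$, then $G'$ must already be $H$-free, and we have destroyed all copies of $H$ by deleting at most $\varepsilon n^2$ edges.

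I expect the main obstacle to be the embedding lemma in the last step, and in particular ensuring that the greedy procedure never gets stuck: one must verify that after embedding the first $i$ vertices of $H$, the set of admissible images for the $(i+1)$-st vertex is still nonempty --- indeed of size $\Omega(n)$. This forces $\gamma$ to be chosen very small relative to $\eta$ and $h$ (roughly $\gamma \ll \eta^{h}$), and requires iterating the defining property of a regular pair $h$ times while controlling, at each step, the number of vertices whose neighbourhood into some previously used part is abnormally small. By contrast, the cleaning step and the passage to the contrapositive are purely bookkeeping.
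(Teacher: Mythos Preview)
Your sketch is the standard regularity-lemma proof and is correct as outlined; the cleaning step and the counting/embedding lemma are exactly the right ingredients, and your caution about choosing $\gamma \ll \eta^{h}$ is well placed. Note, however, that the paper does not give its own proof of \cref{thm:graph-removal}: it is stated as a known result of Ruzsa--Szemer\'edi and Alon--Duke--Lefmann--R\"odl--Yuster/F\"uredi, cited as background for the minimum-degree variants that the paper actually studies. So there is no paper proof to compare against; your approach is precisely the classical one appearing in those references.
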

Despite its simple statement, the graph removal lemma is a deep result, with many applications in number theory, computer science, and graph theory. For more on the removal lemma and its history, we refer to the survey \cite{MR3156927}.

Many important questions surrounding the graph removal lemma remain open. Most notably, the correct bound for $\delta$ in terms of $\varepsilon$ is unknown. Formally, let $\delta(\varepsilon,H)$ denote the maximum\footnote{Note that this maximum is attained (i.e.\ that one can write ``maximum'' rather than ``supremum''), because we require \emph{fewer than} $\delta \ab{V(G)}^h$ copies of $H$, but allow deleting \emph{at most} $\varepsilon\ab{V(G)}^2$ edges.} $\delta$ such that the following holds for every graph $G$: if $G$ has fewer than $\delta \ab{V(G)}^h$ copies of $H$, then $G$ can be made $H$-free by removing at most $\varepsilon \ab{V(G)}^2$ edges. 
The best lower bound on $\delta(\varepsilon,H)$, due to Fox \cite{MR2811609}, shows that $\delta(\varepsilon,H) \geq 1/T(O_h(\log \frac 1 \varepsilon))$, where $T$ is the tower function, recursively defined by $T(0)=1$ and $T(x)=2^{T(x-1)}$ for $x\geq 1$. For the upper bound, Alon \cite{MR1945375} (extending \cite{MR519318} and \cite{MR932119}) showed that if $H$ is not bipartite\footnote{If $H$ is bipartite, then $\delta(\varepsilon,H)=\varepsilon^{\Theta_H(1)}$, i.e.\ the removal lemma has polynomial bounds \cite{MR1945375}. The removal lemma is somewhat degenerate in case $H$ is bipartite, since in this case the entire problem reduces to counting copies of bipartite graphs in dense graphs, which can be done with the method of K\H ovari--S\'os--Tur\'an \cite{MR65617}. This is closely related to a famous conjecture of Erd\H os--Simonovits and Sidorenko, see e.g.\ \cite{MR2738996} for details.}, then $\delta(\varepsilon,H) \leq \varepsilon^{\Omega_H(\log \frac 1 \varepsilon)}$ as $\varepsilon \to 0$. In particular, if $H$ is not bipartite, then $1/\delta(\varepsilon,H)$ must be at least super-polynomial in $1/\varepsilon$. Even in the first non-trivial case, of $H=K_3$, these bounds remain the best known results. 

Another important class of results in extremal graph theory concerns structural results implied by minimum degree conditions. Notable examples include Dirac's theorem \cite{MR47308} on the existence of Hamiltonian cycles, its extension by Koml\'{o}s, S\'{a}rk\"{o}zy, and Szemer\'{e}di \cite{MR1682919} to powers of Hamiltonian cycles, and the Andr\'asfai--Erd\H os--S\'os theorem \cite{MR340075} on when a $K_r$-free graph is $(r-1)$-partite. 

In this paper, we study a natural minimum-degree version of the graph removal lemma. Formally, let us define $\delta(\varepsilon,H;\gamma)$ to be the maximum $\delta \in [0,1]$ such that every $n$-vertex graph with fewer than $\delta n^h$ copies of $H$ and minimum degree at least $\gamma n$ can be made $H$-free by deleting at most $\varepsilon n^2$ edges. Here, $\gamma \in [0,1]$ is some constant which we think of as fixed, and we are interested in the behavior of $\delta(\varepsilon,H;\gamma)$ as $\varepsilon \to 0$. We remark that this function is non-decreasing in $\gamma$, and that setting $\gamma=0$ recovers the earlier definition of $\delta(\varepsilon,H)$, that is, $\delta(\varepsilon,H;0)=\delta(\varepsilon,H)$.

Our main results show that $\delta(\varepsilon,K_r;\gamma)$ is linear in $\varepsilon$ if $\gamma>\frac{2r-5}{2r-3}$, but that it is super-polynomial in $\varepsilon$ if $\gamma< \frac{2r-5}{2r-3}$. Formally, we first prove the following theorem, which asserts that $\delta(\varepsilon,K_r;\gamma)$ is linear in $\varepsilon$ for $\gamma > \frac{2r-5}{2r-3}$.
\begin{thm}\label{thm:above-threshold}
	For every $r \geq 3$, there exists $\mu_r>0$ such that for all $\alpha, \varepsilon>0$,
	\[
		\delta\left(\varepsilon, K_r; \frac{2r-5}{2r-3}+\alpha\right) \geq \mu_r \alpha \varepsilon,
	\]
	meaning that $\delta(\varepsilon,K_r;\gamma)$ is linear in $\varepsilon$ for all $\gamma>\frac{2r-5}{2r-3}$.
\end{thm}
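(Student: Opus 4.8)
The decisive observation is that $\tfrac{2r-5}{2r-3}$ is precisely the \emph{chromatic threshold} of $K_r$: by theorems of \L{}uczak (for $r=3$) and of Goddard--Lyle and Nikiforov (in general), and in the sharpest form by Allen--B\"ottcher--Griffiths--Kohayakawa--Morris, for every $\gamma>\tfrac{2r-5}{2r-3}$ there is a constant $C=C(r,\gamma)$ so that every $K_r$-free graph $G$ with $\delta(G)\ge\gamma n$ admits a homomorphism into some $K_r$-free graph on at most $C$ vertices. My plan is to establish a robust, ``defect'' version of this structural fact and read the theorem off from it. Passing to the contrapositive, it suffices to prove: if $\delta(G)\ge(\tfrac{2r-5}{2r-3}+\alpha)n$ and $G$ cannot be made $K_r$-free by deleting $\varepsilon n^2$ edges, then $G$ contains at least $\mu_r\alpha\varepsilon n^r$ copies of $K_r$. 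In other words, in this minimum-degree regime, being $\varepsilon$-far from $K_r$-free forces $\Omega_r(\alpha\varepsilon n^r)$ copies --- in sharp contrast with the unrestricted setting, where (by the construction behind Alon's lower bound) $\varepsilon$-farness is compatible with only $\varepsilon^{\,\omega_r(\log(1/\varepsilon))}n^r$ copies. It is exactly this rigidity that the chromatic-threshold structure provides.

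I would prove the contrapositive by induction on $r$. The engine is that the threshold behaves well under neighbourhoods: if $\delta(G)\ge(\tfrac{2r-5}{2r-3}+\alpha)n$ then $\ab{N(u)\cap N(v)}\ge\ab{N(v)}-(1-\gamma)n$ for every $v$ and every $u\in N(v)$, so the minimum degree of $G[N(v)]$ is at least $\bigl(2-\tfrac1\gamma\bigr)\ab{N(v)}$; since $2-\tfrac1\gamma$ equals $\tfrac{2(r-1)-5}{2(r-1)-3}$ when $\gamma=\tfrac{2r-5}{2r-3}$ and has derivative at least $1$ on $(0,1]$, the graph $G[N(v)]$ lies above the chromatic threshold of $K_{r-1}$ with the \emph{same} parameter $\alpha$. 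Because $\sum_v \#K_{r-1}(G[N(v)])=r\cdot\#K_r(G)$, all but a few vertices $v$ have neighbourhoods containing comparatively few copies of $K_{r-1}$, and there the inductive hypothesis applies (the base case $r=3$ resting instead on \L{}uczak's structure theorem for triangle-free graphs of large minimum degree); it yields, for almost all $v$, a homomorphism of a large subgraph of $G[N(v)]$ into a bounded $K_{r-1}$-free graph.

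The technical heart is to assemble these local structures into a single near-homomorphism $\varphi$ of $G$ into a bounded $K_r$-free graph $F$ whose \emph{defect set} --- the edges $uv$ with $\varphi(u)\varphi(v)\notin E(F)$ --- is small: deleting the defect edges leaves a genuine homomorphism into $F$, hence a $K_r$-free graph. One cannot do this by taking the union of the per-neighbourhood deletion sets, which may have $\Theta(n^3)$ edges; instead one uses that the targets form a finite list, so almost all neighbourhoods realise the \emph{same} target, and the corresponding near-partitions of $V(G)$ can be merged into one whose classes are the fibres $\varphi^{-1}(a)$, after which a double-counting argument charges the defect edges to copies of $K_r$ in $G$ with a loss linear in $\alpha^{-1}$. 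A preliminary cleaning step --- iteratively deleting every edge lying in at least $\eta n^{r-2}$ copies of $K_r$, which costs only $O(\#K_r(G)/(\eta n^{r-2}))$ deletions with no logarithmic overhead and leaves every surviving edge in few copies --- disposes of the ``clustered'' copies and leaves the merging argument to deal with the residual, inherently ``spread-out'' part.

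I expect the main obstacle to be precisely this robust structural statement with the \emph{right} dependence on $\alpha$: linearity in $\alpha$ is needed so that letting $\alpha\to 0$ recovers the known super-polynomial behaviour below the threshold, and this forces one to re-run the proof of the chromatic-threshold theorem quantitatively rather than cite it as a black box. Two further points need care. First, the minimum-degree hypothesis is not inherited by subgraphs obtained by deleting edges, so throughout one must carry a small exceptional set of ``popular'' vertices --- those incident to unusually many copies of $K_r$ --- and check that it is small enough to delete outright without blowing the budget (this is where the bookkeeping in $\alpha$ and $\varepsilon$ is most delicate). Second, it is essential that the argument go through the global homomorphism and not merely a local density condition: there are graphs with $\delta(G)>(\tfrac{2r-5}{2r-3}+\alpha)n$ containing a copy of $K_r$ every edge of which lies in just one copy, so no ``every edge is in few copies'' cleaning can, by itself, certify $K_r$-freeness.
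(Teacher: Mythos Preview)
Your proposal outlines a genuine strategy, but it is both far more elaborate than what is needed and rests on a claim that the paper in fact refutes. The paper's proof is a short averaging argument: it shows directly (\cref{lem:key-lemma}) that if $\delta(G)\ge(\tfrac{2r-5}{2r-3}+\alpha)n$, then \emph{every} copy of $K_r$ in $G$ contains an edge lying in at least $c_r\alpha n^{r-2}$ copies of $K_r$. Given this, one simply lets $E^*$ be the set of all such ``popular'' edges; since each contributes $c_r\alpha n^{r-2}$ copies and $G$ has at most $\mu_r\alpha\varepsilon n^r$ copies in total, $\ab{E^*}\le\varepsilon n^2$, and deleting $E^*$ destroys every $K_r$. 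No induction, no homomorphism target, no merging of local structures --- the entire proof is two pages. The key lemma is proved by a direct pigeonhole/averaging argument on the neighbourhoods $V_i=N(v_i)$ of the vertices of a fixed $K_r$, iteratively finding many $K_t$'s inside $\bigcap_{i\in S}V_i$ for shrinking sets $S$.

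Your final paragraph asserts that ``there are graphs with $\delta(G)>(\tfrac{2r-5}{2r-3}+\alpha)n$ containing a copy of $K_r$ every edge of which lies in just one copy, so no `every edge is in few copies' cleaning can, by itself, certify $K_r$-freeness.'' This is precisely what \cref{lem:key-lemma} disproves: above the threshold, no such $K_r$ exists. In other words, the ``preliminary cleaning step'' you describe --- deleting edges in at least $\eta n^{r-2}$ copies --- is not preliminary at all; with $\eta=c_r\alpha$ it is the \emph{whole} proof. Your proposed route through a quantitative chromatic-threshold structure theorem, local-to-global merging of homomorphism targets, and exceptional-vertex bookkeeping may well be workable, but you have not actually carried it out (you yourself flag the ``technical heart'' and ``main obstacle'' as open), and it is building heavy machinery to avoid a difficulty that is not there.
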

Our next result implies that below the threshold $\frac{2r-5}{2r-3}$, the $K_r$ removal lemma must have super-polynomial bounds. In fact, we are able to relate the behavior of the restricted removal function $\delta(\varepsilon,K_r;\gamma)$ to that of the unrestricted \emph{triangle} removal function $\delta(\varepsilon,K_3)$; since this function is known to have super-polynomial bounds, we conclude the same for $\delta(\varepsilon,K_r;\gamma)$. Formally, we prove the following result. 
\begin{thm}\label{thm:below-threshold}
	For every integer $r \geq 3$ and every $\alpha>0$, there exists some $C=C({r,\alpha})>0$ such that for every $\varepsilon>0$, 
	\[
		\delta\left(\varepsilon,K_r;\frac{2r-5}{2r-3}- \alpha\right) \leq  \delta (C\varepsilon, K_3).
	\]
	In particular, $\delta(\varepsilon, K_r; \gamma)\inv$ is super-polynomial in $\varepsilon\inv$ for fixed $\gamma<\frac{2r-5}{2r-3}$.
\end{thm}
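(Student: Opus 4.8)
The plan is to prove \Cref{thm:below-threshold} by a reduction: from an $N_0$-vertex graph $G_0$ that is a ``hard instance'' for triangle removal, we construct an $n$-vertex graph $G$ (with $n$ a fixed multiple of $N_0$) that has minimum degree at least $\gamma n$, where $\gamma=\frac{2r-5}{2r-3}-\alpha$, has not too many copies of $K_r$, and nonetheless cannot be made $K_r$-free by deleting $\varepsilon n^2$ edges. Quantitatively, by definition of $\delta(C\varepsilon,K_3)$ as a maximum, for every $\delta''>\delta(C\varepsilon,K_3)$ there is some $G_0$ on $N_0$ vertices with fewer than $\delta'' N_0^3$ triangles that cannot be made triangle-free by deleting $C\varepsilon N_0^2$ edges. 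Moreover, since the standard lower-bound constructions (Ruzsa--Szemer\'edi / Behrend) are tripartite, we may take $G_0$ balanced tripartite, which is convenient: it has no $K_4$, and after a uniform blow-up its parts can be taken of any prescribed size. Our construction of $G$ from $G_0$ will be a blow-up-type operation that preserves the relevant densities up to bounded factors, so that $G$ has fewer than $\delta' n^r$ copies of $K_r$ with $\delta'$ arbitrarily close to $\delta(C\varepsilon,K_3)$ (absorbing the constant loss into $C$), yet requires more than $\varepsilon n^2$ deletions; this gives $\delta(\varepsilon,K_r;\gamma)\le\delta(C\varepsilon,K_3)$, and the ``in particular'' follows from the known super-polynomial lower bound on $\delta(\cdot,K_3)^{-1}$ recalled above.

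The construction is a weighted blow-up. Fix a small ``template'' graph $F$ with a designated vertex $v_0\in V(F)$; replace $v_0$ by (a uniform blow-up of) the tripartite hard instance $G_0$, replace every other vertex of $F$ by an independent set, and join two blobs completely exactly when the corresponding vertices of $F$ are adjacent. The template $F$, the choice of $v_0$, and the (possibly unequal) blob sizes are chosen so that three things hold at once. First, every copy of $K_r$ in $G$ must consist of a triangle inside the $G_0$-blob together with an $(r-3)$-clique among the other blobs, so that the number of copies of $K_r$ in $G$ is the number of triangles of $G_0$ times a bounded factor; this forces $F\setminus v_0$ to be $K_r$-free in a robust way (e.g.\ $N_F(v_0)$ spans no triangle, and a triangle of $G_0$ has only a bounded collection of blobs in which to find its extension). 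Second, the blob sizes are tuned so that the minimum degree of $G$ equals $\bigl(\tfrac{2r-5}{2r-3}-\alpha\bigr)n$; here the edges internal to $G_0$ contribute negligibly (the hard instances have maximum degree $o(N_0)$), so the minimum degree is governed by $F$ and the blob sizes, and the extremal choice pushes the ratio up to exactly $\tfrac{2r-5}{2r-3}$. Third --- the source of the edge-deletion lower bound --- making $G$ $K_r$-free must cost at least as much as making $G_0$ triangle-free: any deletion strategy either destroys all triangles of $G_0$ (cost $>C\varepsilon N_0^2\gtrsim\varepsilon n^2$) or else must delete a complete bipartite graph between two linear-sized blobs, or do something still more expensive.

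A clean way to organize the general case is by \emph{coning}. Suppose $G'$ is a hard instance, on $n'$ vertices, for the restricted \emph{triangle} removal problem at minimum-degree threshold $\tfrac13$, and (which we can arrange) is $K_4$-free. Join $G'$ to a complete $(r-3)$-partite graph $B$ with all parts of a common size $b$. Then the copies of $K_r$ in $G'\vee B$ are exactly the pairs (triangle of $G'$, transversal of $B$); destroying them all requires either making $G'$ triangle-free or deleting a complete bipartite graph incident to a part of $B$ (expensive once $b$ is linear in $n'$), and --- crucially --- the minimum-degree arithmetic works out exactly: the constraint coming from vertices of $B$ forces $b\le(1+o(1))\tfrac{2n'}{3}$, and then the constraint from vertices of $G'$ forces $G'$ to have minimum degree at least $(\tfrac13-o(1))n'$, precisely the triangle threshold. (One also checks $C=C(r,\alpha)$ can be taken to be a constant multiple of $(2r-3)^2/9$, which is where the factor $C$ in the statement comes from.) Thus the whole theorem reduces to the case $r=3$: producing, from any hard instance $G_0$ of triangle removal, a triangle-density-preserving ``padding'' of $G_0$ with minimum degree $(\tfrac13-\alpha)n$ that is still hard to repair.

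The main obstacle is exactly this base case $r=3$. Naive paddings fail in one of two opposite ways: attaching a linear-sized independent set complete to only one side of the (sparse but positive-density) tripartite hard graph creates no new triangles but cannot push the minimum degree past $\tfrac16 n$, whereas attaching a linear-sized set complete to two sides makes the triangle density jump to a constant multiple of the edge density of $G_0$, which is far too large, since the target triangle density is super-polynomially small. The resolution is a more structured padding --- a blow-up of a small triangle-free graph interacting with $G_0$ in such a way that each ``degree-boosting'' blob is adjacent to large but carefully chosen vertex subsets on \emph{both} sides of $G_0$, subsets containing essentially none of the edges of $G_0$ --- and it is the extremal version of this trade-off that produces the constant $\tfrac13$, hence $\tfrac{2r-5}{2r-3}$. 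The remaining work, and the most technical step, is to verify the edge-deletion lower bound for this padding: that no clever mixture of deletions (deleting some triangles of $G_0$ and blocking the extensions of the survivors) beats simply making $G_0$ triangle-free. This is a min-cut/LP-duality-flavored case analysis, which I expect to be the hardest part of the argument.
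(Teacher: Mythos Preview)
Your high-level plan matches the paper's: reduce general $r$ to $r=3$ by taking the join with a complete $(r-3)$-partite graph whose parts have size $\tfrac{2}{2r-3}n$, and handle $r=3$ by padding a tripartite hard instance for triangle removal. You are also right that the coning step requires a word of justification for the deletion lower bound; the paper states this rather tersely, and your awareness that ``destroying all (triangle, transversal) pairs'' could in principle be done by attacking the transversal side is well placed (though a simple averaging over choices of one vertex per part of the cone shows any such strategy costs at least a fixed fraction of the cost of making $G_0$ triangle-free, which is absorbed into $C$).

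Where you have a genuine gap is the base case $r=3$. You correctly identify the tension---attaching a blob to one part of the tripartite hard instance $H$ gives too little degree, attaching to two parts creates too many triangles---and you gesture at ``a blow-up of a small triangle-free graph,'' but then head in the wrong direction by proposing blobs adjacent to carefully chosen subsets on \emph{two} sides of $H$ and anticipating a ``min-cut/LP-duality-flavored case analysis'' for the deletion bound. The paper's construction sidesteps all of this: take $H$ on $\alpha n$ vertices, take $\Gamma$ a balanced blow-up of the \emph{path} $P_3$ on $(1-\alpha)n$ vertices, and join the $i$th part of $H$ to the $i$th part of $\Gamma$ by a complete bipartite graph, for $i=1,2,3$. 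Each padding blob is complete to exactly \emph{one} part of $H$; the degree boost for $\Gamma$-vertices comes from the internal $P_3$ edges of $\Gamma$, not from touching two parts of $H$. A one-line check shows the resulting graph is tripartite and that \emph{every} triangle lies entirely inside $H$, so no new triangles are created at all. Consequently the deletion lower bound is immediate---any edge set whose removal kills all triangles must in particular kill all triangles of $H$---and no case analysis is needed. (Your side remark that hard instances have maximum degree $o(N_0)$ is both false in general and unnecessary: the minimum degree comes from the $\Gamma$ side regardless.)

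In short: keep your reduction to $r=3$, but replace the vague padding with the explicit $P_3$-blow-up glued part-to-part; then the two verifications you flag as hardest become trivial.
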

Somewhat surprisingly, our technique does not enable us to upper-bound $\delta(\varepsilon, K_r;\gamma)$ in terms of $\delta(\varepsilon, K_r)$ for $\gamma<\frac{2r-5}{2r-3}$. Because of this, to prove super-polynomial bounds on the restricted $K_r$ removal function, we must use that such bounds are known for the unrestricted triangle removal function.

The results in \cref{thm:above-threshold,thm:below-threshold} tell us that $\frac{2r-5}{2r-3}$ is a minimum degree threshold for the $K_r$ removal lemma: below this threshold, the removal lemma has super-polynomial bounds, whereas above it we have linear bounds. We can formalize this notion of threshold as follows.
\begin{Def}\label{def:removal-thresholds}
	Let $H$ be a graph. We define the \emph{linear removal threshold} of $H$ to be
	\[
		\linrem(H) = \inf \{ \gamma \in [0,1]:\text{there exists } \mu>0\text{ so that }\delta(\varepsilon,H; \gamma) \geq \mu \varepsilon\text{ for all }\varepsilon \in (0,1)\}.
	\]
	Similarly, we define the \emph{polynomial removal threshold} of $H$ to be
	\[
		\polyrem(H) = \inf \{ \gamma \in [0,1]:\text{there exists } \mu>0\text{ so that }\delta(\varepsilon,H; \gamma) \geq \mu \varepsilon^{1/\mu}\text{ for all }\varepsilon \in (0,1)\}.
	\]
\end{Def}
These thresholds measure the weakest possible minimum degree condition one can impose in order to have, respectively, linear and polynomial bounds in the graph removal lemma for $H$. In this language, \cref{thm:above-threshold,thm:below-threshold} can be rephrased as saying that
\[
	\linrem(K_r) = \polyrem(K_r) = \frac{2r-5}{2r-3}.
\]
In addition to determining the linear and polynomial removal thresholds for $K_r$, we also prove some results about $\linrem(H)$ and $\polyrem(H)$ for more general classes of graphs, and make a number of conjectures about the relationship between these thresholds and other well-known thresholds in extremal graph theory. We refer to \cref{sec:conclusion} for more details. 

We remark that other versions of the graph removal lemma have been studied under certain minimum degree--like assumptions, such as in \cite{MR3653100,MR3063153}. More generally, there is a long line of work on how the numerical dependencies in the removal lemma (and in Szemer\'edi's regularity lemma) can be improved under certain assumptions about the host graph, e.g.\ \cite{MR2455594,MR3926281,MR2341924,MR2815610,MR3943496,MR3585030,MR3145742}.

The rest of the paper is organized as follows. In the next section, we prove \cref{thm:above-threshold}. In \cref{sec:below-threshold}, we prove \cref{thm:below-threshold} by exhibiting a specific graph of high minimum degree and poor $K_r$ removal properties. We end with some concluding remarks, where we generalize these results and study $\linrem(H)$ and $\polyrem(H)$ for general graphs $H$, and discuss the connections this problem has to the chromatic and homomorphism thresholds of graphs. For the sake of clarity of presentation, we omit all floor and ceiling signs whenever they are not crucial.

\section{Above the threshold: the proof of Theorem \ref{thm:above-threshold}}\label{sec:above-threshold}
In this section, we prove \cref{thm:above-threshold}. Unlike all known proofs of the full graph removal lemma, our proof uses only simple averaging arguments to find a small set of edges, each of which lies in many copies of $K_r$. We then show that removing all these edges deletes all copies of $K_r$ in $G$. Crucially, these averaging arguments only work because of our minimum degree assumption; as shown by \cref{thm:below-threshold}, they cannot possibly work if the minimum degree is below $(\frac{2r-5}{2r-3}-\alpha)n$ for any fixed $\alpha>0$.

Here is a restatement of \cref{thm:above-threshold}, restated to indicate what exactly we will prove in this section.
\begin{thm}\label{thm:above-threshold-extended}
	For every $r \geq 3$, there exists $\mu_r>0$ such that the following holds for all $\alpha, \varepsilon>0$. Let $G$ be an $n$-vertex graph with minimum degree at least $(\frac{2r-5}{2r-3}+\alpha)n$, and suppose that $G$ contains at most $(\mu_r \alpha \varepsilon) n^r$ copies of $K_r$. Then $G$ can be made $K_r$-free by deleting at most $\varepsilon n^2$ edges. 
\end{thm}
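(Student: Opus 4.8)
The plan is to exploit the minimum degree condition to show that every edge of $G$ which lies in \emph{at least one} copy of $K_r$ must in fact lie in \emph{many} copies of $K_r$ — on the order of $\Omega(\alpha n^{r-2})$ of them. Granting this, the argument finishes quickly: let $F$ be the set of edges of $G$ that lie in at least one copy of $K_r$. Deleting all of $F$ clearly makes $G$ into a $K_r$-free graph. On the other hand, each copy of $K_r$ contains $\binom r2$ edges, all of which lie in $F$; so if $G$ has $N$ copies of $K_r$, a double-counting argument gives $|F| \cdot \Omega(\alpha n^{r-2}) \geq \binom r2 N$ is the wrong direction — instead we count pairs (edge $e\in F$, copy of $K_r$ through $e$): this is at most $\binom r2 N$, and at least $|F|\cdot \Omega(\alpha n^{r-2})$, so $|F| \leq O(N/(\alpha n^{r-2}))$. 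With $N \leq \mu_r \alpha \varepsilon n^r$ and $\mu_r$ chosen small relative to the implied constant, this yields $|F| \leq \varepsilon n^2$, as desired.

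So the heart of the matter — and the step I expect to be the main obstacle — is the claim that every edge in a copy of $K_r$ lies in $\Omega(\alpha n^{r-2})$ copies of $K_r$. I would prove this by induction on $r$, or more precisely by iteratively extending a clique one vertex at a time. Suppose $e = xy$ lies in a copy of $K_r$; I want to count the common neighborhoods of growing cliques. The basic tool is that if $S$ is a set of $k$ vertices spanning a clique, then by minimum degree and inclusion–exclusion, $|\bigcap_{v\in S} N(v)| \geq n - k(n - \delta(G)) = n - k\cdot \frac{2}{2r-3}n + k\alpha n = (1 - \frac{2k}{2r-3})n + k\alpha n$. For this bound to be useful (positive, and growing with $\alpha$) we need $1 - \frac{2k}{2r-3} \geq 0$, i.e. $k \leq \frac{2r-3}{2}$, i.e. $k \leq r-2$ (since $k$ is an integer and $\frac{2r-3}{2} = r - \frac32$). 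That is exactly the regime where the threshold $\frac{2r-5}{2r-3}$ comes from: when $k = r-2$ we get $|\bigcap_{v\in S}N(v)| \geq (1 - \frac{2(r-2)}{2r-3})n + (r-2)\alpha n = \frac{1}{2r-3}n + (r-2)\alpha n \geq \alpha n$.

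Here is how I would assemble these estimates into the clique-counting bound. Given the edge $e=xy$ in a copy of $K_r$, we know $N(x)\cap N(y)$ contains $r-2$ vertices forming a clique with $e$; but more importantly, by the $k=2$ case of the above, $|N(x)\cap N(y)| \geq (1 - \frac{4}{2r-3})n + 2\alpha n$, which is a positive fraction of $n$ for $r \geq 4$ — and for $r=3$ the bound $\Omega(\alpha n^{r-2}) = \Omega(\alpha n)$ is exactly $|N(x)\cap N(y)| \geq 2\alpha n$ and we are already done. For general $r$, I would build the clique greedily: having chosen a clique $S \supseteq \{x,y\}$ of size $k < r$ all of whose vertices are adjacent, pick the next vertex from $\bigcap_{v\in S}N(v)$. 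The subtlety is that not every vertex of $\bigcap_{v \in S}N(v)$ extends $S$ to a clique of size $k+1$ that itself extends all the way to a $K_r$ — so I cannot just multiply the sizes $|N(x)\cap N(y)| \cdot |\bigcap \text{over 3 vtcs}| \cdots$ naively. The cleanest fix is to run the induction on the quantity "number of $K_r$'s through a fixed clique of size $k$": one shows by downward induction on $k$ from $k=r$ to $k=2$ that any clique of size $k$ that lies in at least one $K_r$ actually lies in $\Omega_r(\alpha^{?} n^{r-k})$ copies — the point being that a clique $S$ of size $k \le r-2$ has $|\bigcap_{v \in S}N(v)| \ge \Omega(n)$ (or $\ge \alpha n$ when $k = r-2$), and among those $\ge \Omega(n)$ vertices, the ones that fail to extend $S\cup\{w\}$ to a $K_r$ can be controlled, or — better — one observes that \emph{every} $w \in \bigcap_{v\in S}N(v)$ gives a clique $S \cup \{w\}$ of size $k+1$, and when $k+1 \le r-2$ this still has large common neighborhood, so by induction each such $S\cup\{w\}$ lies in $\ge \Omega(\alpha^{c} n^{r-k-1})$ copies of $K_r$; summing over $w$ and dividing by the overcount (at most $r$) gives the bound for $S$. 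The base case $k = r-2$: common neighborhood has size $\ge \alpha n$, each vertex there extends to size $r-1$, repeat once more to get size $r$; so a clique of size $r-2$ lies in $\ge \frac12 (\alpha n)(\alpha n) = \Omega(\alpha^2 n^2)$ copies of $K_r$ (the $\frac12$ since the last two vertices are unordered). Propagating this up from $k=r-2$ down to $k=2$ multiplies in $r-4$ further factors each of size $\Omega(n)$ (no extra $\alpha$'s, since for $k < r-2$ the common-neighborhood bound $(1-\frac{2k}{2r-3})n$ is already $\Omega(n)$), so an edge lies in $\Omega(\alpha^2 n^{r-2})$ copies of $K_r$. Thus $|F| \le O(\binom r2 N / (\alpha^2 n^{r-2}))$; taking $N \le \mu_r \alpha \varepsilon n^r$ would only give $|F| \le O(\mu_r \varepsilon n^2/\alpha)$, which is fine — but to match the clean statement $\delta \ge \mu_r\alpha\varepsilon$ one wants the count to be $\Omega(\alpha n^{r-2})$, not $\Omega(\alpha^2 n^{r-2})$; I would recover the missing factor of $\alpha$ by being slightly more careful at the $k=r-2$ base case (the bound there is genuinely $(\frac{1}{2r-3} + (r-2)\alpha)n \ge \frac{1}{2r-3}n$, giving one factor of $\Omega(n)$ for free and only \emph{one} factor carrying an $\alpha$), so that an edge lies in $\Omega(\alpha n^{r-2})$ copies and the stated linear-in-$\alpha\varepsilon$ bound follows. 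The main obstacle, to reiterate, is making the greedy extension argument rigorous despite the fact that a vertex in a common neighborhood need not extend to a full $K_r$ — handled by the downward induction on clique size, using that "lies in at least one $K_r$" is preserved and that the relevant common neighborhoods are large.
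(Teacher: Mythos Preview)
Your high-level strategy---identify a set of ``popular'' edges, show every $K_r$ contains one, and delete them---matches the paper. But the key lemma you aim for is strictly stronger than what is true, and your argument for it has a genuine gap.

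You claim that \emph{every} edge lying in a $K_r$ lies in $\Omega(\alpha n^{r-2})$ copies of $K_r$. This is false already for $r=3$: your own formula gives $|N(x)\cap N(y)| \ge (1-\tfrac{4}{2r-3})n + 2\alpha n$, which for $r=3$ is $(-\tfrac13 + 2\alpha)n$, not $2\alpha n$ as you wrote; so an edge in a triangle can have a single common neighbor. For general $r$ the failure is at your base case $k=r-2$: after choosing $w_1$ in the common neighborhood of the $(r-2)$-clique you have an $(r-1)$-clique, and ``repeat once more'' does not work, because the common-neighborhood bound for an $(r-1)$-clique is
\[
n - (r-1)\Bigl(\tfrac{2}{2r-3}-\alpha\Bigr)n = \Bigl(-\tfrac{1}{2r-3} + (r-1)\alpha\Bigr)n,
\]
which is negative for small $\alpha$. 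So an $(r-1)$-clique need not extend at all, and your downward induction cannot get started: the hypothesis ``lies in at least one $K_r$'' is \emph{not} preserved when you adjoin an arbitrary common neighbor, and dropping that hypothesis makes the base case false.

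The paper instead proves the weaker (and true) statement that every copy of $K_r$ contains \emph{some} popular edge, and the proof is genuinely different from greedy extension. Fixing a $K_r$ on $v_1,\dots,v_r$ with $V_i=N(v_i)$, one inductively maintains a set $S_t\subseteq [r]$ of size $r-t$ together with $\Omega(n^t)$ copies of $K_t$ inside $\bigcap_{i\in S_t}V_i$. To grow $t$, one looks at common neighbors of such a $K_t$ and uses an averaging (Markov) argument to show that a positive fraction of them lie in all but one of the $V_i$, $i\in S_t$; then pigeonhole selects which index to drop, shrinking $S_t$ to $S_{t+1}$. The final step, carried out carefully with the $\alpha$ dependence, leaves $|S|=2$ and $\Omega(\alpha n^{r-2})$ copies of $K_{r-2}$ in $V_i\cap V_j$ for some pair $\{i,j\}$---so the edge $v_iv_j$ is popular. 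The point is that the argument \emph{chooses} which two of the $v_i$'s to keep, rather than fixing the edge in advance; this is exactly what your approach misses.
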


We will need the following simple fact from calculus (or basic algebra).

\begin{lem}\label{lem:calc-inequality}
	For any $x \geq 4$, we have that
	\[
		x \frac{2x-5}{2x-3} \geq x-2 + \frac 25.
	\]
\end{lem}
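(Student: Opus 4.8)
The plan is to reduce the claimed inequality to a manifestly true polynomial inequality by clearing the denominator. Since $x \geq 4$, we have $2x - 3 > 0$, so multiplying both sides of $x\frac{2x-5}{2x-3} \geq x - 2 + \frac{2}{5}$ by $2x-3$ is an equivalence-preserving step. Rewriting the right-hand side as $x - \frac{8}{5}$, I would therefore show instead that
\[
	x(2x-5) - \left(x - \tfrac{8}{5}\right)(2x-3) \geq 0 \qquad \text{for all } x \geq 4.
\]

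The second step is just to expand the left-hand side. We have $x(2x-5) = 2x^2 - 5x$ and $\left(x - \frac{8}{5}\right)(2x-3) = 2x^2 - 3x - \frac{16}{5}x + \frac{24}{5} = 2x^2 - \frac{31}{5}x + \frac{24}{5}$. Subtracting, the quadratic terms cancel and we are left with
\[
	-5x + \tfrac{31}{5}x - \tfrac{24}{5} = \tfrac{6x - 24}{5} = \tfrac{6(x-4)}{5},
\]
which is clearly nonnegative precisely when $x \geq 4$. Tracing back, this establishes the lemma; in fact it shows the slightly stronger identity $x\frac{2x-5}{2x-3} - \left(x - 2 + \frac{2}{5}\right) = \frac{6(x-4)}{5(2x-3)}$, so equality holds exactly at $x = 4$.

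There is no real obstacle here: the only thing to be mildly careful about is the sign of $2x-3$ when clearing denominators, which is why the hypothesis $x \geq 4$ (rather than merely $x > 3/2$) is comfortably sufficient. One could alternatively present the argument without ever clearing denominators, by simply writing down the identity above and checking it by direct computation, but the clearing-denominators route makes the bookkeeping marginally cleaner.
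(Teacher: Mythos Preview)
Your proof is correct. It differs from the paper's argument: the paper defines $f(x) = x\frac{2x-5}{2x-3} - (x-2)$, observes via differentiation that $f$ is monotonically increasing, and then simply evaluates $f(4) = \tfrac{2}{5}$. Your approach is purely algebraic---clearing the positive denominator and reducing to the linear inequality $6(x-4) \geq 0$---and in fact yields the sharper closed-form identity $f(x) - \tfrac{2}{5} = \tfrac{6(x-4)}{5(2x-3)}$, which makes the equality case at $x=4$ explicit without appealing to monotonicity. Both arguments are elementary; yours avoids calculus entirely, while the paper's route generalizes more readily if one only cared about a qualitative lower bound rather than the exact constant $\tfrac{2}{5}$.
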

\begin{proof}
	Differentiating shows that the function $f(x)=x \frac{2x-5}{2x-3} - (x-2)$ is monotonically increasing, so its value for all $x \geq 4$ is lower-bounded by its value at $x=4$, and $f(4) = \frac 25$. 
\end{proof}
Our main technical result is the following lemma, which says that if $G$ has minimum degree at least $(\frac{2r-5}{2r-3}+\alpha)n$, then every $K_r$ in $G$ contains a ``popular'' edge, namely an edge lying in $\Omega_r(\alpha n^{r-2})$ copies of $K_r$. 

\begin{lem}\label{lem:key-lemma}
	Let $r \geq 3$ and $\alpha>0$. If $G$ is an $n$-vertex graph with minimum degree at least $(\frac{2r-5}{2r-3} + \alpha)n$, then every $K_r$ in $G$ contains an edge which lies in at least $c_r \alpha n^{r-2}$ copies of $K_r$, for some constant $c_r>0$ depending only on $r$.
\end{lem}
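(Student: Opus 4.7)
Write $V_0 = \{v_1, \ldots, v_r\}$; for each pair $\{i,j\} \subseteq [r]$, set $W_{ij} = N(v_i) \cap N(v_j)$, so that the number of $K_r$'s in $G$ containing the edge $v_iv_j$ is exactly $k_{r-2}(G[W_{ij}])$, the number of $K_{r-2}$-copies in $G[W_{ij}]$. It suffices to show that $\sum_{\{i,j\}} k_{r-2}(G[W_{ij}]) \geq c_r\alpha \binom{r}{2} n^{r-2}$ and then apply pigeonhole. For $r=3$ this follows directly from inclusion--exclusion: $\sum_{\{i,j\}} |W_{ij}| \geq \sum_i |N(v_i)| - |N(v_1) \cup N(v_2) \cup N(v_3)| \geq 3\alpha n$, and since $k_1 = |\cdot|$ is just vertex count, some $|W_{ij}| \geq \alpha n$, so we may take $c_3 = 1$.

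\textbf{Case $r \geq 4$: the popular-subset construction.} For each $u \in V(G)$ let $d_u = |N(u) \cap V_0|$; then $\sum_u d_u = \sum_i |N(v_i)|$, which by \cref{lem:calc-inequality} is at least $(r-2 + \tfrac{2}{5} + r\alpha)n$. A short manipulation using $d_u \leq r$ yields $2|A_0| + |A_{\geq 1}| \geq (\tfrac{2}{5} + r\alpha)n$, where $A_0 = \{u : d_u = r\}$ and $A_{\geq 1}$ is the union of the classes $A_k = \{u : d_u = r-1,\ u \not\sim v_k\}$ for $k = 1, \ldots, r$. A weighted pigeonhole (the class $A_0$ carrying weight $2$) produces some $k^* \in \{0,1,\ldots,r\}$ with $|A_{k^*}| \geq (\tfrac{2}{5} + r\alpha)n/(r+2) = \Omega_r(n) + \Omega_r(\alpha n)$. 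The key property is that any two vertices $u, w \in A_{k^*}$ satisfy $S_u \cap S_w \supseteq [r] \setminus \{k^*\}$, where $S_u = \{i : v_i \in N(u)\}$; consequently, every copy of $K_{r-2}$ in $G[A_{k^*}]$ lies inside $W_{ij}$ for each of the $\binom{r-1}{2}$ pairs $\{i,j\} \subseteq [r] \setminus \{k^*\}$. Therefore $\sum_{\{i,j\}} k_{r-2}(G[W_{ij}]) \geq \binom{r-1}{2} \cdot k_{r-2}(G[A_{k^*}])$, and it remains to show $k_{r-2}(G[A_{k^*}]) \geq c'_r \alpha n^{r-2}$.

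\textbf{Main obstacle.} This final step is where the difficulty lies. The naive inclusion--exclusion lower bound on the minimum degree of $G[A_{k^*}]$, namely $|N(u) \cap A_{k^*}| \geq d(u) + |A_{k^*}| - n$, can be negative once $\alpha$ is small, since $|A_{k^*}|$ may be substantially smaller than $n - \delta(G)$. To overcome this I would exploit the structural constraint $A_{k^*} \subseteq V \setminus N(v_{k^*})$ (valid when $k^* \geq 1$), which sharply restricts where edges of $G$ incident to $A_{k^*}$ can go and thereby forces the edge density inside $G[A_{k^*}]$ to be nontrivial. Combining this structural information with the $\Omega_r(\alpha n)$ slack in the size of $|A_{k^*}|$, and running a supersaturation (or Kruskal--Katona) style count of $K_{r-2}$'s inside $G[A_{k^*}]$, should yield the required linear-in-$\alpha$ bound. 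If a one-shot supersaturation inside $A_{k^*}$ turns out to be too lossy, an alternative is to iterate the popular-subset construction inside $G[A_{k^*}]$ (whose induced minimum degree satisfies an analogous inequality with a slightly diminished slack), building up the $K_{r-2}$ vertex by vertex.
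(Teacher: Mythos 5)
Your base case $r=3$ is correct, and your setup for $r\geq 4$ (reducing to counting $K_{r-2}$'s in the common neighborhoods $W_{ij}$, and the weighted pigeonhole producing a large class $A_{k^*}$ of vertices adjacent to all of $V_0$ except possibly $v_{k^*}$) is sound as far as it goes. But the proof has a genuine gap exactly where you flag the "main obstacle," and the route you sketch for closing it does not work. The set $A_{k^*}$ has size only $\Omega_r(n)$ with a small implied constant (roughly $\tfrac{2n}{5(r+2)}$ when $\alpha$ is small), while each of its vertices may have up to $(\tfrac{2}{2r-3}-\alpha)n$ non-neighbors; for every $r\geq 4$ the latter exceeds the former, so $G[A_{k^*}]$ carries no induced minimum degree guarantee whatsoever and may in fact be edgeless. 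The structural constraint $A_{k^*}\subseteq V\setminus N(v_{k^*})$ gives no lower bound on the edge density inside $A_{k^*}$ (and is vacuous when $k^*=0$), so neither a supersaturation count nor an iteration of your construction inside $G[A_{k^*}]$ can get off the ground. There is also a second, quantitative issue: your argument does not isolate where the factor $\alpha$ in the final count $c_r\alpha n^{r-2}$ should come from, since the slack $\tfrac25$ from \cref{lem:calc-inequality} swamps the $r\alpha$ term.

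The fix is to abandon the idea of finding the entire $K_{r-2}$ inside one set of vertices with a common adjacency pattern to $V_0$. Instead, build the $K_{r-2}$ one vertex at a time, maintaining a shrinking index set $S_t\subseteq[r]$ with $\ab{S_t}=r-t$ and a supply of $\Omega_r(n^t)$ copies of $K_t$ inside $\bigcap_{i\in S_t}V_i$. At each step you look at the common neighborhood of the current partial clique $Q$ \emph{in the whole graph} $G$ — this has size at least $\tfrac{2r-3-2t}{2r-3}n$ by the global minimum degree condition, which is what restores the averaging — and use your Markov/pigeonhole argument to find a positive fraction of these common neighbors lying in at least $r-t-1$ of the sets $V_i$, $i\in S_t$; you then drop one index from $S_t$. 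The slack $\tfrac25$ is spent once per step to keep the counts at $\Omega_r(n^t)$, and only the very last step (going from a $K_{r-3}$ to a $K_{r-2}$, where the average degree into the three surviving indices is $1+\Omega_r(\alpha)$ rather than $1+\tfrac25$) produces the factor $\alpha$, landing the $K_{r-2}$ inside $V_i\cap V_j$ for a single pair $\{i,j\}$. This vertex-by-vertex relaxation of the adjacency constraints is the missing idea.
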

\begin{proof}
	Fix a copy of $K_r$ in $G$, and let its vertices be $v_1,\dotsc,v_r$. For $i \in [r]$, let $V_i = N(v_i)$ denote the neighborhood of $v_i$. Note that by the minimum degree condition, we have that $\ab{V_i} \geq (\frac{2r-5}{2r-3} + \alpha)n$ for each $i$. We will prove the following claim by induction.

	\begin{claim} 
		For each integer $0 \leq t \leq r-3$, there exists a set $S_t \subseteq [r]$ of size $\ab{S_t} = r-t$ and at least $c_{r,t} n^{t}$ copies of $K_t$ whose vertices lie in $\bigcap_{i \in S_t}V_i$, for some constant $c_{r,t}>0$. 
	\end{claim}
	\begin{proof}[Proof of claim]
		The base case $t=0$ is trivial, since we simply take $S_0 = [r]$ and $c_{r,0}=1$. 
		Inductively, suppose we have found a set $S_t$ with the desired properties, for $t \leq r-4$. Let $Q$ be a copy of $K_t$ with vertices in $\bigcap_{i \in S_t} V_i$.  Since every vertex in $Q$ has degree at least $(\frac{2r-5}{2r-3} + \alpha)n$, there are at most $(\frac{2}{2r-3} - \alpha)n < \frac{2}{2r-3}n$ vertices not adjacent to any given vertex in $Q$. Thus, the common neighborhood of $Q$ has size at least
		\[
			m \coloneqq n - t \left(\frac{2}{2r-3} n \right) = \frac{2r - 3 -2t}{2r-3} n.
		\]
		Note that for any vertex $v \in V(G)$ and for any set of $m$ vertices in $G$, the number of edges between $v$ and this $m$-set is at least
		\[
			\left(\frac{2r-5}{2r-3} + \alpha \right)n - (n-m) > \frac{2r-5 - 2t}{2r-3}n = \frac{2r-5-2t}{2r-3-2t}m.
		\]
		Now consider an auxiliary bipartite graph $B_t$, whose first part consists of $S_t$, whose second part consists of $m$ arbitrary common neighbors of the vertices in $Q$, and where a vertex $v$ in the second part is adjacent to a vertex $i$ in the first part if $v \in V_i$. By the computation above, each vertex in the first part of $B_t$ has degree at least $\frac{2r-5-2t}{2r-3-2t}m$.
		The first part of $B_t$ has $r-t$ vertices. Hence, the average degree in the second part of $B_t$ is at least $(r-t)\frac{2r-5-2t}{2r-3-2t}$, which is at least $r-t-2+ \frac25$, by \cref{lem:calc-inequality} applied to $x=r-t$ and using the fact that $t \leq r-4$, which implies that $x\geq 4$. By Markov's inequality, at least $m/5$ vertices in the second part of $B_t$ have degree at least $r-t-1$. Therefore, there are at least $(c_{r,t} n^{t})(m/5)$ choices of a clique $Q$ contained in $\bigcap_{i \in S_t} V_i$, and a common neighbor of $Q$ that lies in at least $r-t-1$ of the sets $V_i$ for $i \in S_t$. Hence, by the pigeonhole principle, for at least $c_{r,t} n^t m/ (5 (r-t))$ of these choices, the same subset of $S_t$ of order $r-t-1$ is used. We let $S_{t+1}$ be this subset, and let
		\[
			c_{r,t+1} = \frac{c_{r,t}}{5(r-t)} \frac mn = \frac{(2r-3-2t)}{5(2r-3)(r-t)}c_{r,t},
		\]
		so that there are at least $c_{r,t+1}n^{t+1}$ choices of a $K_{t+1}$ whose vertices lie in $\bigcap_{i \in S_{t+1}}V_i$. This completes the proof of the claim.
	\end{proof}

	To conclude, we actually run the same argument for $t=r-3$, except that we need to be more careful about keeping track of the parameter $\alpha$. Let $S = S_{r-3}$ be the set given by the claim for $t=r-3$, and let $c = c_{r,r-3}$. Let $Q$ be a $K_{r-3}$ whose vertices lie in $\bigcap_{i \in S} V_i$. Let $B$ be the bipartite graph whose first part consists of three vertices, labeled by the elements of $S$, and whose second part consists of $m$ common neighbors of the vertices in $Q$, where
	\[
		m = n - (r-3) \left( \frac{2}{2r-3} - \alpha \right) n = \left(\frac{3}{2r-3} + (r-3) \alpha\right)n.
	\]
	By the same argument as above, each vertex in the first part of $B$ has degree at least
	\begin{align*}
		\left(\frac{2r-5}{2r-3} + \alpha \right)n - (n-m) &= \left( \frac{1}{2r-3} + (r-2) \alpha \right) n \\
		&= \frac{\frac{1}{2r-3}+(r-2)\alpha}{\frac{3}{2r-3}+(r-3) \alpha}m\\
		&=\frac{1+(r-2)(2r-3)\alpha}{3+(r-3)(2r-3) \alpha} m\\
		&\geq \left(\frac 13+c' \alpha\right)m,
	\end{align*}
	for some constant $c'>0$ depending only on $r$. Therefore, the average degree in the second part of $B$ is at least $1+3c' \alpha$. By Markov's inequality, this implies that at least $\frac 32 c'\alpha m$ vertices in this part have at least two neighbors in the first part. Hence, there are at least $c n^{r-3}\cdot \frac 32 c' \alpha m$ choices of a $K_{r-3}$ and a vertex in its common neighborhood which lies in at least two of the three sets $V_i$ for $i \in S$. By the pigeonhole principle, there is some $\{i,j\} \subset S$ such that $V_i \cap V_j$ contains at least $c_r \alpha n^{r-2}$ copies of $K_{r-2}$, where
	\[
		c_r = \frac{cc'}{2} \frac mn = \frac{cc'}{2}\left(\frac{3}{2r-3} + (r-3) \alpha\right) \geq \frac{3cc'}{2(2r-3)}.
	\]
	Therefore, the edge $\{v_i,v_j\}$ in our original $K_r$ lies in at least $c_r \alpha n^{r-2}$ copies of $K_r$.
\end{proof}

Using \cref{lem:key-lemma}, we can prove \cref{thm:above-threshold-extended}, and thus \cref{thm:above-threshold}.
\begin{proof}[Proof of \cref{thm:above-threshold-extended}]
	Let $\mu_r = c_r/\binom r2$, where $c_r$ is the constant from \cref{lem:key-lemma}, and let $\delta = \mu_r \alpha \varepsilon$. Let $G$ be an $n$-vertex graph with minimum degree at least $(\frac{2r-5}{2r-3}+\alpha)n$ and with at most $\delta n^r$ copies of $K_r$.

	Let $E^*$ denote the set of edges in $G$ which lie in at least $c_r \alpha n^{r-2}$ copies of $K_r$. Then the number of $K_r$ in $G$ is at least $\binom r2 \inv c_r \alpha n^{r-2} \ab{E^*}$, since each edge in $E^*$ contributes at least $c_r \alpha n^{r-2}$ copies, and we count each copy at most $\binom r2$ times (once for each edge). By assumption, $G$ has at most $\delta n^r$ copies of $K_r$, and combining these bounds, we find that
	\[
		\ab{E^*} \leq \frac{\binom r2}{c_r} \frac{\delta}{\alpha} n^2= \varepsilon n^2,
	\]
	by our choice of $\delta = \mu_r\alpha \varepsilon$.

	Additionally, by \cref{lem:key-lemma}, we know that every $K_r$ in $G$ contains at least one edge from $E^*$. Hence, if we delete the edges in $E^*$, we are left with a $K_r$-free graph. Since we deleted at most $\varepsilon n^2$ edges, this completes the proof.
\end{proof}

\section{Below the threshold: the proof of Theorem \ref{thm:below-threshold}}\label{sec:below-threshold}
In this section, we prove \cref{thm:below-threshold} by constructing a graph with high minimum degree, few copies of $K_r$, but such that many edges must be removed to make it $K_r$-free. Formally, we will prove the following result.
\begin{thm}\label{thm:below-threshold-extended}
	For every integer $r \geq 3$, parameters $\alpha>0$ and $\varepsilon>0$, and all sufficiently large $n$, there exists an $n$-vertex graph $G$ with minimum degree at least $(\frac{2r-5}{2r-3}- \alpha)n$ and with at most $\frac{\alpha^3}{(r/3)^r} \delta ( \frac{(2r-3)^2}{\alpha^2}\varepsilon, K_3 ) n^r$ copies of $K_r$, but at least $\varepsilon n^2$ edges must be deleted from $G$ to make it $K_r$-free. Therefore, 
	\[
		\delta\left(\varepsilon,K_r;\frac{2r-5}{2r-3}- \alpha\right) \leq \frac{\alpha^3}{(r/3)^r} \delta \left( \frac{(2r-3)^2}{\alpha^2}\varepsilon, K_3 \right).
	\]
\end{thm}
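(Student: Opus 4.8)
The plan is to prove the stated inequality by an explicit construction: for the given \(r\), \(\alpha\), and (sufficiently small) \(\varepsilon\), we build an \(n\)-vertex graph \(G\) with minimum degree at least \((\tfrac{2r-5}{2r-3}-\alpha)n\), with at most \(\tfrac{\alpha^3}{(r/3)^r}\delta(\tfrac{(2r-3)^2}{\alpha^2}\varepsilon,K_3)n^r\) copies of \(K_r\), that cannot be made \(K_r\)-free by deleting \(\varepsilon n^2\) edges; the displayed bound on \(\delta(\varepsilon,K_r;\tfrac{2r-5}{2r-3}-\alpha)\) is then immediate from the definition of that function. The graph \(G\) has three ingredients. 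The heart is a graph \(G^*\) which is (essentially) extremal for the \emph{triangle} removal problem at the rescaled parameter \(\varepsilon' := \tfrac{(2r-3)^2}{\alpha^2}\varepsilon\): a balanced tripartite graph on \(w\) vertices with parts \(A_1,A_2,A_3\), having at most \((\delta(\varepsilon',K_3)+o(1))w^3\) triangles but admitting no triangle-free subgraph obtained by deleting only \(\varepsilon' w^2\) edges. Such \(G^*\) exist on every sufficiently large \(w\) (take a balanced blow-up of a near-extremal example, which may be taken tripartite), and we choose \(w=\Theta_r(\alpha n)\), as small as the argument below permits. The remaining two ingredients are gadget parts: a complete-multipartite ``blow-up'' part, which converts triangles of \(G^*\) into copies of \(K_r\) and simultaneously raises the relevant minimum-degree threshold from the \(K_3\)-value \(\tfrac13\) to the \(K_r\)-value \(\tfrac{2r-5}{2r-3}\) (this is the standard reduction from \(K_r\) to \(K_3\), tuned to the arithmetic of the chromatic threshold of \(K_r\), of the same flavour as \cref{lem:calc-inequality}); and a ``padding'' part, which repairs the minimum degree of those vertices of \(G^*\) that have small --- possibly zero --- degree in \(G^*\).

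Concretely, first form \(\Gamma\) on vertex set \(A_1\cup A_2\cup A_3\cup D_1\cup D_2\cup D_3\), where \(D_1,D_2,D_3\) are new independent sets of a common size \(d\), by starting from \(G^*\) and adding every edge between \(D_i\) and \(A_i\) (for each \(i\)) together with every edge between \(D_1\) and \(D_2\) and between \(D_2\) and \(D_3\); then let \(G\) be the complete \((r-2)\)-partite graph one of whose parts is \(V(\Gamma)\), carrying the internal structure \(\Gamma\), the remaining \(r-3\) parts \(C_1,\dots,C_{r-3}\) being independent sets of a common size \(c=\tfrac{2n}{2r-3}\), with \(d\) then determined by \(|V(\Gamma)|=n-(r-3)c\). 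A short check gives \(\omega(\Gamma)=3\), the only triangles of \(\Gamma\) being the triangles of \(G^*\): no \(D_i\)-vertex lies in a triangle of \(\Gamma\) (each \(D_i\) meets only \(A_i\) among the \(A\)-parts, and the \(D\)-parts span a triangle-free graph). Hence every copy of \(K_r\) in \(G\) is a triangle of \(G^*\) together with one vertex from each \(C_i\), so \(G\) has exactly \(t(G^*)\,c^{r-3}\le (\delta(\varepsilon',K_3)+o(1))\,w^3c^{r-3}\) of them; with the above choices of \(w\) and \(c\) one has \(w^3c^{r-3}\le \tfrac{\alpha^3}{(r/3)^r}n^r\) (an elementary inequality in \(r\), with room to spare for all \(r\ge 3\)), which gives the claimed bound on the number of \(K_r\)'s once \(n\) is large enough to absorb the \(o(1)\). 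The minimum-degree condition is a routine computation of the same flavour as \cref{lem:calc-inequality}: the \(C_i\)- and \(D_i\)-vertices lie comfortably above \((\tfrac{2r-5}{2r-3}-\alpha)n\), while the binding case is an \(A_i\)-vertex \(v\), whose degree \(\deg_{G^*}(v)+d+(r-3)c\) is at least \(d+(r-3)c\), which the choice of \(d\) makes at least \((\tfrac{2r-5}{2r-3}-\alpha)n\).

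The main point is that \(G\) is far from \(K_r\)-free. Suppose \(F\) is an edge set with \(|F|\le\varepsilon n^2\) whose removal destroys all copies of \(K_r\). Discard from \(F\) every edge meeting a \(D_i\)-vertex (such edges lie in no \(K_r\)), so that \(F\) consists of edges inside \(V(G^*)\), edges from \(V(G^*)\) to \(\bigcup_i C_i\), and edges between distinct \(C_i\)'s. If \(F\) has at least \(\tfrac12\varepsilon'w^2\) edges inside \(V(G^*)\), then, \(w\) having been chosen so that \(\varepsilon'w^2\ge 3\varepsilon n^2\), we get \(|F|>\varepsilon n^2\), a contradiction; so assume otherwise, whence \(G^*\) with those edges removed still needs more than \(\tfrac12\varepsilon'w^2\) further deletions to become triangle-free. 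Now, by averaging, pick a vertex \(c_i^*\in C_i\) for each \(i\) so that the number of \(F\)-edges joining \(\{c_1^*,\dots,c_{r-3}^*\}\) to \(V(G^*)\) is \(O_r(\varepsilon n)\) and no \(c_i^*c_j^*\) is an edge of \(F\) (for the latter we use that \(w\), hence \(c\), is not too small relative to \(\varepsilon n\)). Let \(B\subseteq V(G^*)\) be the set of \(G^*\)-endpoints of the \(F\)-edges incident to some \(c_i^*\), so \(|B|=O_r(\varepsilon n)\). Then deleting from \(G^*\) both the \(F\)-edges inside \(V(G^*)\) and all edges meeting \(B\) must yield a triangle-free graph --- otherwise a surviving triangle of \(G^*\), together with \((c_1^*,\dots,c_{r-3}^*)\), would be a copy of \(K_r\) in \(G\) meeting no edge of \(F\). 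But this is a triangle-free subgraph of \(G^*\) obtained by deleting at most \(\tfrac12\varepsilon'w^2+|B|\,w=\tfrac12\varepsilon'w^2+O_r(\varepsilon n w)\le\varepsilon'w^2\) edges (again using \(w=\Theta_r(\alpha n)\) with \(\varepsilon\) small), contradicting the choice of \(G^*\). So no such \(F\) exists, completing the construction and hence the proof.

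I expect the last step --- showing \(G\) is far from \(K_r\)-free --- to be the main obstacle. Two things need care: making the averaging choice of the transversal \((c_1^*,\dots,c_{r-3}^*)\) serve both requirements at once (small \(|B|\), and no \(c_i^*c_j^*\) in \(F\)), and tracking constants through the whole reduction so that the number of \(K_r\)'s lands below \(\tfrac{\alpha^3}{(r/3)^r}\delta(\tfrac{(2r-3)^2}{\alpha^2}\varepsilon,K_3)n^r\) exactly. One must also keep the quantitative constraints on \(w\) mutually consistent --- roughly \(\varepsilon'w^2\gtrsim\varepsilon n^2\), \(w\gtrsim\varepsilon n\), and \(\varepsilon'<\tfrac12\) (so that \(G^*\) exists at all) --- which restricts \(\varepsilon\) to lie below a threshold \(\varepsilon_0(r,\alpha)\); the remaining regime \(\varepsilon\ge\varepsilon_0\), being much less interesting, is handled by a separate easy argument.
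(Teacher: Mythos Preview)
Your construction is the same as the paper's: a near-extremal tripartite witness $G^*$ for triangle removal (the paper's $H$), padded with a blow-up of the three-vertex path (your $D_1,D_2,D_3$; the paper's $\Gamma$) to supply the minimum degree, the whole then joined to a balanced complete $(r-3)$-partite graph (your $C_i$'s; the paper's $K$). The paper first isolates the $r=3$ case as \cref{lem:below-threshold-r=3} and then takes the join for $r\ge 4$, whereas you assemble all three ingredients at once, but the resulting graph $G$ is identical and the arithmetic of the part sizes matches.

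The one substantive difference is the justification that $G$ is $\varepsilon n^2$-far from $K_r$-free. The paper dispatches this in a single sentence --- ``if we delete some edges to make $G$ be $K_r$-free, we must in particular make $G_0$ triangle-free'' --- and reads off the lower bound. You instead give a genuine argument: choose a typical transversal $(c_1^*,\dots,c_{r-3}^*)$ by averaging, show its bad set $B\subseteq V(G^*)$ is $O_r(\varepsilon n)$, and conclude that $(F\cap E(G^*))\cup\{\text{edges meeting }B\}$ already makes $G^*$ triangle-free, contradicting the extremality of $G^*$. Your extra care here is warranted: the paper's sentence is not literally correct (one could delete only edges between $V(G_0)$ and the $C_i$'s and kill every $K_r$ while leaving $G_0$ with all its triangles), and an averaging/random-transversal argument of exactly the kind you outline is what is needed to turn the intuition into a proof. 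So your proposal is the paper's approach with this step spelled out; the remaining issues you flag (tracking constants so that the $K_r$-count lands under $\frac{\alpha^3}{(r/3)^r}\delta(\cdot)n^r$, and the mutual consistency of the constraints on $w$) are routine and line up with the paper's choices $w=\frac{3\alpha}{2r-3}n$ and $c=\frac{2}{2r-3}n$.
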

Note that \cref{thm:below-threshold-extended} is somewhat stronger than \cref{thm:below-threshold}, because we discarded the factor $\frac{\alpha^3}{(r/3)^r}$ in the statement of \cref{thm:below-threshold}. We will first prove \cref{thm:below-threshold-extended} in the case $r=3$, and then show how to extend this construction to prove \cref{thm:below-threshold-extended} for all $r \geq 4$. 

We will need two simple and well-known lemmas. The first says that balanced blowups preserve the triangle removal properties of graphs.
\begin{lem}\label{lem:blowup-preserves}
	Let $H_0$ be a graph with $t$ triangles, let $m$ be the minimum number of edges that one can delete to make $H_0$ triangle-free, and let $s$ be a positive integer. Then the balanced blowup $H_0[s]$ has exactly $s^3t$ triangles, and the minimum number of edges one can delete to make $H_0[s]$ triangle-free is $s^2m$.
\end{lem}
\begin{proof}
	The first claim is immediate since every triangle in $H_0$ corresponds to $s^3$ triangles in $H_0[s]$. For the second, suppose we delete fewer than $s^2 m$ edges from $H_0[s]$. We pick a random copy of $H_0$ in $H_0[s]$ by independently picking a uniformly random vertex in each part of $H_0[s]$. Then the expected number of deleted edges in this copy of $H_0$ is less than $(s^2m)/s^2 = m$. So there exists a copy of $H_0$ in $H_0[s]$ with fewer than $m$ edges deleted, and this copy must contain a triangle by assumption. Thus, by deleting fewer than $s^2 m$ edges, we cannot destroy all triangles in $H_0[s]$. On the other hand, suppose we are given a set $E^*$ of $m$ edges in $H_0$ whose deletion destroys all triangles in $H_0$. By deleting all $s^2 m$ edges of $H_0[s]$ which correspond to a blown-up copy of an edge in $E^*$, we delete all triangles in $H_0[s]$, so $s^2 m$ edge deletions suffice to destroy all triangles in $H_0[s]$.
\end{proof}

The second lemma says that one can convert any construction for the triangle removal lemma into a tripartite construction with similar parameter dependencies. We remark that this lemma is not fully optimized, and one could obtain better constants through a more careful argument. 
\begin{lem}\label{lem:make-tripartite}
	Suppose that $H_0$ is an $n_0$-vertex graph with at most $\delta n_0^3$ triangles, but such that at least $\varepsilon n_0^2$ edges must be deleted to make $H_0$ triangle-free. Then there exists a tripartite graph $H$ on $n\coloneqq 3n_0$ vertices with at most $\frac 29 \delta n^3$ triangles, such that at least $\frac 19 \varepsilon n^2$ edges must be deleted to make $H$ triangle-free.
\end{lem}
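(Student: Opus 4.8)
The plan is to build $H$ as the \emph{tripartite cover} of $H_0$: take $V(H) = V(H_0) \times \{1,2,3\}$ with parts $V_i = V(H_0) \times \{i\}$, and join $(u,i)$ to $(v,j)$ exactly when $i \neq j$ and $uv \in E(H_0)$. By construction this is tripartite on $3n_0 = n$ vertices, so the only things to check are the triangle count and the edge-deletion lower bound.

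For the triangle count, I would first note that since $H$ is tripartite, every triangle of $H$ uses exactly one vertex from each part, say $(u,1),(v,2),(w,3)$, and by definition of the edges this happens iff $uv,vw,uw \in E(H_0)$. As $H_0$ is loopless, $u,v,w$ are then distinct and span a triangle of $H_0$; conversely each triangle of $H_0$ lifts in exactly $3! = 6$ ways (one per bijection to the parts). Hence $H$ has exactly $6$ times as many triangles as $H_0$, i.e.\ at most $6\delta n_0^3 = \tfrac{6}{27}\delta n^3 = \tfrac 29 \delta n^3$, as wanted.

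For the edge-deletion bound, suppose $F \subseteq E(H)$ is a set with $H - F$ triangle-free; I would project $F$ down to $H_0$ by associating to each $(u,i)(v,j) \in F$ the edge $uv \in E(H_0)$, and let $\bar F \subseteq E(H_0)$ be the resulting set, so $|\bar F| \le |F|$. The claim is that $H_0 - \bar F$ is triangle-free: if $\{u,v,w\}$ spanned a triangle there, then $(u,1),(v,2),(w,3)$ would span a triangle in $H$, and none of its three edges could lie in $F$, since otherwise its projection ($uv$, $vw$, or $uw$) would lie in $\bar F$ — contradicting that $\{u,v,w\}$ survives in $H_0 - \bar F$. So that triangle survives in $H - F$, a contradiction. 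Therefore $|F| \ge |\bar F| \ge \varepsilon n_0^2 = \tfrac 19 \varepsilon n^2$.

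I do not expect any genuine obstacle here: the only points needing a little care are that the edge-projection map need not be injective (so it can only decrease cardinality, which is all we use) and that looplessness of $H_0$ makes the lifted triple an honest triangle. The argument is deliberately wasteful — one could, for instance, project edges between a single chosen pair of parts, or average over the three pairs — which is why the constants $\tfrac 29$ and $\tfrac 19$ are not optimal.
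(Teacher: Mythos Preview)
Your proof is correct and is essentially identical to the paper's: the paper also takes $H = H_0 \times K_3$ (what you call the tripartite cover), counts that each triangle of $H_0$ lifts to exactly six in $H$, and projects a triangle-destroying edge set of $H$ down to $H_0$ by forgetting the second coordinate to get the $\tfrac19\varepsilon n^2$ lower bound. Even your remark that the constants are not optimal matches the paper's comment preceding the lemma.
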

\begin{proof}
	Consider the tensor product $H = H_0 \times K_3$, which is the graph whose vertices are pairs $(v,x) \in V(H) \times [3]$, and where two vertices $(v,x)$ and $(w,y)$ are adjacent if and only if $x \neq y$ and $v \sim w$ in $H_0$. We claim that $H$ has the desired properties.

	Indeed, by definition, $H$ is tripartite and has $n=3n_0$ vertices. Moreover, each triangle in $H_0$ yields precisely six triangles in $H$, so $H$ has at most $6 \delta n_0^3 =\frac 29 \delta n^3$ triangles. To conclude, suppose that $E^* \subseteq E(H)$ is a set of edges whose deletion makes $H$ triangle-free. Let $E^*_0 \subseteq E(H_0)$ denote the edges of $H_0$ obtained by deleting the second coordinate of every vertex in every edge of $E^*$; in particular, $\ab{E^*_0} \leq \ab{E^*}$. We claim that deleting the edges in $E_0^*$ makes $H_0$ triangle-free. Indeed, if $\{v_1,v_2,v_3\} \subseteq V(H_0)$ form a triangle in $H_0$ after the deletion of $E_0^*$, then we see that no edge in $E^*$ can be of the form $\{(v_i,x),(v_j,y)\}$ for any $i \neq j$ and $x \neq y$. In particular, we find that $\{(v_1,1),(v_2,2),(v_3,3)\}$ is a triangle in $H$ whose edges do not intersect $E^*$, contradicting the assumption that the deletion of $E^*$ destroyed all triangles in $H$. Hence, by the defining property of $H_0$, we conclude that
	\[
		\ab{E^*} \geq \ab{E_0^*} \geq \varepsilon n_0^2 = \frac 19 \varepsilon n^2,
	\]
	as claimed. 
\end{proof}

The next lemma is simply a restatement of  \cref{thm:below-threshold-extended} in the case $r=3$. We state it as a separate lemma because the $r=3$ construction will be used as a black box in the construction for larger values of $r$.
\begin{lem}\label{lem:below-threshold-r=3}
	For all $\alpha>0$ and $\varepsilon>0$ and all sufficiently large $n$, there exists a tripartite $n$-vertex graph $G_0$ with minimum degree at least $(\frac 13 - \alpha)n$ and with at most $\alpha^3\delta(9 \varepsilon/\alpha^2, K_3)n^3$ triangles, but at least $\varepsilon n^2$ edges must be deleted from $G_0$ to make it triangle-free.
\end{lem}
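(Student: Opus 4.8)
The plan is to start from an arbitrary optimal (or near-optimal) construction $H_0$ witnessing the value of $\delta(\varepsilon', K_3)$ for a suitably rescaled parameter $\varepsilon'$, make it tripartite using Lemma~\ref{lem:make-tripartite}, and then glue on additional vertices to boost the minimum degree up to $(\tfrac13 - \alpha)n$ without creating too many new triangles. First I would fix $\varepsilon' := 9\varepsilon/\alpha^2$ and invoke the definition of $\delta(\varepsilon', K_3)$: there is some $m$-vertex graph $H_0$ with fewer than $\delta(\varepsilon', K_3) m^3$ triangles such that at least $\varepsilon' m^2$ edges must be deleted to destroy all triangles. Applying Lemma~\ref{lem:make-tripartite} yields a tripartite graph $H$ on $3m$ vertices, with parts $X_1, X_2, X_3$ of size $m$ each, with at most $\tfrac29 \delta(\varepsilon', K_3)(3m)^3$ triangles and requiring at least $\tfrac19 \varepsilon'(3m)^2$ edge deletions.

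The key construction step is the following: take $G_0$ to consist of the tripartite graph $H$ sitting inside a larger tripartite graph on parts $W_1, W_2, W_3$, where each $W_i \supseteq X_i$ and $|W_i| = N$ for an appropriate $N \gg m$. Between the ``new'' vertices (those in $W_i \setminus X_i$) and the rest, I would place a \emph{triangle-free} tripartite graph of high minimum degree — for instance, a balanced blow-up of a single edge, or more carefully a bipartite-type pattern that connects $W_i \setminus X_i$ to $W_j$ for $i \ne j$ in a way that avoids closing triangles with $H$. The cleanest choice is: make $W_1 \setminus X_1$ complete to $W_2$ but empty to $W_3$ (and symmetric rotations), which is triangle-free on the new part but may not give enough degree; so instead I would use a more symmetric triangle-free tripartite pattern (e.g.\ based on a proper coloring or an interval structure on each $W_i$) so that every vertex, old or new, has at least $(\tfrac13 - \alpha)n$ neighbors where $n = 3N$. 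Because the added edges form a triangle-free graph and only interact with $H$ in controlled ways, the only triangles in $G_0$ are those already in $H$ (or a bounded number created by at most one new vertex, which I would argue is also triangle-free by the coloring structure), so the triangle count stays at most $\tfrac29 \delta(\varepsilon', K_3)(3m)^3$, and since $n = 3N$ with $N$ a free parameter, choosing $N = m/(\text{something involving }\alpha)$ makes this at most $\alpha^3 \delta(9\varepsilon/\alpha^2, K_3) n^3$ after bookkeeping. Meanwhile the $\ge \tfrac19\varepsilon'(3m)^2 = \varepsilon m^2$ edges that must be deleted from $H$ must still be deleted from $G_0$ (deleting new edges cannot help, as new edges lie in no triangle), and $\varepsilon m^2 \ge \varepsilon n^2$ fails unless $m = n$ — so in fact I need $N$ to be $\Theta(m)$, i.e.\ I add only a constant-fraction number of new vertices, and the role of $\alpha$ is precisely to control how large a triangle-free high-degree ``padding'' I can afford.

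The main obstacle I anticipate is reconciling the two competing demands on $N$: making $N$ large dilutes the triangle density (good) but also dilutes the edge-deletion density (bad, since we need $\ge \varepsilon n^2$). The resolution must be that the padding graph on the new vertices, while triangle-free, still forces its \emph{own} edge deletions to be irrelevant — but it cannot contribute to the $\varepsilon n^2$ count, so one genuinely needs $H$ itself to already have edge-deletion density $\gg \varepsilon$ relative to $n^2$, which is why we rescaled to $\varepsilon' = 9\varepsilon/\alpha^2$ rather than just $9\varepsilon$. Concretely, with $N = m/\alpha$ (so $n = 3N = 3m/\alpha$), the $\varepsilon m^2$ required deletions become $\varepsilon (n\alpha/3)^2 = (\varepsilon\alpha^2/9) n^2$, which is off by $\alpha^2/9$; compensating this forces $\varepsilon' = 9\varepsilon/\alpha^2$, and propagating the same rescaling through the triangle count (which gains a factor $N^3/m^3 = \alpha^{-3}$ going from $m^3$-density to $n^3$-density, but we want to \emph{lose} a factor, so actually $N \asymp \alpha m$, not $m/\alpha$ — I'd sort out the direction carefully during the write-up) produces the claimed $\alpha^3 \delta(9\varepsilon/\alpha^2, K_3)$ bound. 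The one real design choice requiring care is exhibiting an explicit triangle-free tripartite ``base'' graph on parts of size $N$ with minimum degree $\ge (\tfrac13-\alpha)N \cdot 2$ into the other two parts combined — a balanced complete tripartite-minus-a-perfect-tripartite-matching-pattern, or equivalently the tripartite analogue of the Andrásfai-type graph, does the job and is where I'd focus the technical energy.
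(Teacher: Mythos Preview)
Your high-level strategy matches the paper's: rescale to $\varepsilon' = 9\varepsilon/\alpha^2$, take a near-extremal triangle-removal witness on $\Theta(\alpha n)$ vertices, make it tripartite via Lemma~\ref{lem:make-tripartite} to get $H$, and then pad out to $n$ vertices with a triangle-free structure that supplies the minimum degree. (Your first scaling guess $n = 3m/\alpha$ was right; once you substitute $\varepsilon'$ rather than $\varepsilon$ into the deletion count you get $\tfrac19\varepsilon'(3m)^2=\varepsilon n^2$ on the nose, so your later reversal to $N\asymp \alpha m$ was a miscorrection.) The genuine gap is the padding construction itself, which is the whole content of the lemma and which you never pin down.

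Your one concrete proposal---make $Y_i \coloneqq W_i \setminus X_i$ complete to $W_{i+1}$ and empty to $W_{i+2}$ cyclically---is \emph{not} triangle-free on the new vertices: although you declare $Y_1$ empty to $W_3$, the rule ``$Y_3$ complete to $W_1$'' forces every pair in $Y_3 \times Y_1$ to be adjacent anyway, so $Y_1\cup Y_2\cup Y_3$ induces a complete tripartite graph with $(N-m)^3$ triangles. More broadly, embedding the new vertices in the \emph{same} tripartite classes as the $X_i$ creates a real tension: a vertex $u \in Y_1$ needs about $N$ neighbours in $W_2 \cup W_3$, but it cannot safely meet both $X_2$ and $X_3$ (an unknown $H$-edge might close a triangle), and if it meets both $Y_2$ and $Y_3$ you must then kill all $Y_2$--$Y_3$ edges, recreating the same problem one level down. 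An Andr\'asfai-type interval pattern does not obviously escape this.

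The paper sidesteps the difficulty by placing the padding \emph{disjointly}. Let $\Gamma$ be a balanced blowup of the three-vertex path on $(1-\alpha)n$ vertices with parts $A_1,A_2,A_3$, and take $G_0 = H \sqcup \Gamma$ together with a complete bipartite graph between $X_i$ and $A_i$ for each $i$. Every vertex then sees an entire part of $\Gamma$, giving degree at least $\tfrac{1-\alpha}{3}n>(\tfrac13-\alpha)n$; a joining edge $\{x,a\}$ with $x\in X_i$, $a\in A_i$ lies in no triangle since $N(x)\setminus A_i \subseteq \bigcup_{j\ne i}X_j$ while $N(a)\setminus X_i \subseteq \bigcup_{j\ne i}A_j$; and $\Gamma$ itself is bipartite. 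Hence every triangle of $G_0$ already lies in $H$, and $G_0$ remains tripartite (for instance via the partition $(X_1\cup A_2,\ X_2\cup A_3,\ X_3\cup A_1)$).
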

\begin{proof}
	We first claim that for all sufficiently large $n$, there exists an $n$-vertex graph with at most $2 \delta(\varepsilon,K_3)n^3$ triangles such that at least $\varepsilon n^2$ edges must be deleted to make it triangle-free. Indeed, by the definition of $\delta(\varepsilon,K_3)$, there is a sequence of graphs which are all $\varepsilon$-far from being triangle-free, but whose triangle density approaches $\delta(\varepsilon,K_3)$. Thus, there must exist a graph $H_0$ on some fixed number $n_0$ of vertices with at most $\frac 32 \delta(\varepsilon,K_3)n_0^3$ triangles such that at least $\varepsilon n_0^2$ edges must be deleted to make it triangle-free. 
	By \cref{lem:blowup-preserves}, for any $s\geq 1$, the balanced blowup $H_0[s]$ will have at most $\frac 32 \delta(\varepsilon,k_3)(sn_0)^2$ triangles, but at least $\varepsilon (sn_0)^2$ edges must be deleted to make $H_0[s]$ triangle-free.
	Therefore, for $n$ sufficiently large relative to $n_0$, we may take the blowup $H_0[\lfloor n/n_0\rfloor]$ and add to it $n-n_0\lfloor n/n_0\rfloor$ isolated vertices to obtain the desired graph.

	Therefore, by \cref{lem:make-tripartite} applied with parameters $\varepsilon' =9\varepsilon/\alpha^2$ and $n' = \alpha n/3$, there exists a tripartite graph $H$ on $\alpha n$ vertices with fewer than $\frac 29 \cdot 2 \delta(\varepsilon',K_3)(\alpha n)^3< \delta(\varepsilon',K_3)(\alpha n)^3$ triangles such that at least $\frac 19 \varepsilon' (\alpha n)^2$ edges must be deleted to make $H$ triangle-free. Let $\Gamma$ be a balanced blowup of the path with two edges, blown up so that it has $(1- \alpha)n$ vertices. Let $G_0$ be the graph obtained by taking the disjoint union of $H$ and $\Gamma$, and placing a complete bipartite graph between the $i$th part of $H$ and the $i$th part of $\Gamma$, for $i \in [3]$. Then $G_0$ is tripartite by definition. The construction is shown in \cref{fig:r=3-construction}.

	Since every vertex in $G_0$ is adjacent to all vertices in at least one part of $\Gamma$, we see that every vertex in $G_0$ has degree at least $\frac 13 (1- \alpha)n>(\frac 13 -\alpha)n$. Moreover, we see that every triangle in $G_0$ is actually contained in $H$, so the number of triangles in $G_0$ is at most $\delta( \varepsilon', K_3) (\alpha n)^3 = \alpha^3 \delta(9 \varepsilon/\alpha^2, K_3) n^3$. Finally, if we delete some edges to make $G_0$ triangle-free, we must in particular make $H$ triangle-free. Therefore, the number of edges needed to make $G_0$ triangle-free is at least $\frac 19 \varepsilon' (\alpha n)^2 = \frac 19 \cdot \frac{9\varepsilon}{\alpha^2} (\alpha n)^2 = \varepsilon n^2$.\qedhere

	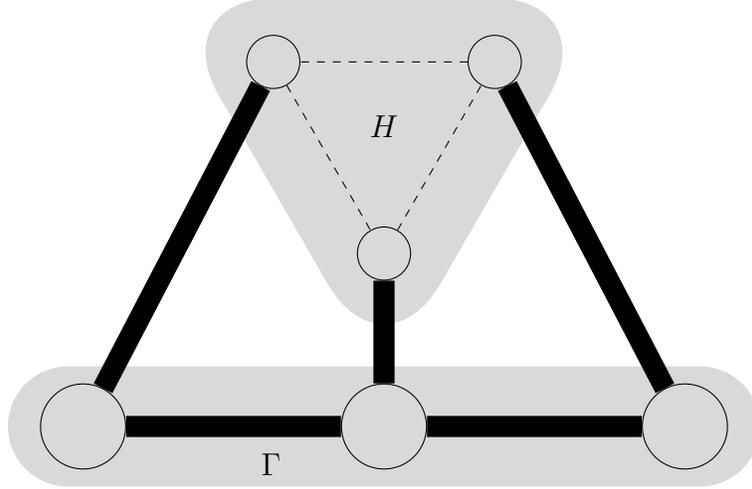
\begin{figure}[ht]
		\begin{center}
			\begin{tikzpicture}
				\node[draw, circle, inner sep=4mm] (A) at (-4, -4) {};
				\node[draw, circle, inner sep=4mm] (B) at (0, -4) {};
				\node[draw, circle, inner sep=4mm] (C) at (4, -4) {};
				\node[draw, circle, inner sep=2.5mm] (X) at (150:1.7) {};
				\node[draw, circle, inner sep=2.5mm] (Y) at (270:1.7) {};
				\node[draw, circle, inner sep=2.5mm] (Z) at (30:1.7) {};
				\draw[line width = 8pt] (A) -- (B);
				\draw[line width = 8pt] (B) -- (C);
				\draw[line width = 8pt] (Z) -- (C);
				\draw[line width = 8pt] (B) -- (Y);
				\draw[line width = 8pt] (A) -- (X);
				\draw[dashed] (X) -- (Y) -- (Z) -- (X);
				\node at (0,0) {$H$};
				\path (B) -- +(-1.5,-.5) node {$\Gamma$};
				\begin{scope}[on background layer]
					\fill[gray!30, rounded corners=15mm] (150:3.4) -- (270:3.4) -- (30:3.4) -- cycle;
					\fill[gray!30, rounded corners=8mm] (-5,-4.8) -- (5,-4.8) -- (5,-3.2) -- (-5, -3.2) -- cycle;
				\end{scope}
			\end{tikzpicture}
		\end{center}
		\caption{The construction in \cref{lem:below-threshold-r=3}. Solid edges represent complete bipartite graphs, and dashed edges represent the edges of $H$ as given by \cref{lem:make-tripartite}.} \label{fig:r=3-construction}
	\end{figure}
\end{proof}

With this result, we are now ready to prove \cref{thm:below-threshold-extended}, and thus \cref{thm:below-threshold}.
\begin{proof}[Proof of \cref{thm:below-threshold-extended}]
	If $r=3$, then the result is precisely the statement of \cref{lem:below-threshold-r=3}. So we henceforth assume that $r \geq 4$. Let $G_0$ be the graph from \cref{lem:below-threshold-r=3}, applied with parameters $\alpha, \varepsilon' = \left(\frac{2r-3}{3}\right)^2\varepsilon$, and $n' = \frac{3}{2r-3}n$. Let $K$ be a complete $(r-3)$-partite graph where each part has $\frac{2}{2r-3}n$ vertices, and let $G$ be the join of $G_0$ and $K$, i.e.\ the graph obtained from the disjoint union of $G_0$ and $K$ by connecting every vertex in $K$ to every vertex in $G_0$. Then $G$ has $(r-3)\frac{2}{2r-3}n + \frac{3}{2r-3}n = n$ vertices. In $G$, every vertex coming from $K$ has degree $(r-4)\frac{2}{2r-3}n+\frac{3}{2r-3}n = \frac{2r-5}{2r-3}n$, and every vertex coming from $G_0$ has degree at least
	\[
		(r-3)\frac{2}{2r-3}n + \left( \frac 13 - \alpha \right) \frac{3}{2r-3}n > \left( \frac{2r-5}{2r-3}- \alpha \right) n,
	\]
	hence $G$ has the desired minimum degree condition. Additionally, since $G_0$ is tripartite, it is $K_t$-free for all $t \geq 4$. Therefore, we see that every $K_r$ in $G$ must consist of a triangle in $G$ and $r-3$ vertices from $K$. Hence, the number of $K_r$ in $G$ is at most 
	\[
		\alpha^3 \delta\left(\frac{9 \varepsilon'}{\alpha^2}, K_3\right) (n')^3 \cdot \left( \frac{2n}{2r-3} \right) ^{r-3} \leq \frac{\alpha^3}{(r/3)^r} \delta \left( \frac{(2r-3)^2}{\alpha^2}\varepsilon, K_3 \right) n^r.
	\]
	Moreover, if we delete some edges to make $G$ be $K_r$-free, we must in particular make $G_0$ triangle-free. Thus, the number of edges that must be deleted is at least $\varepsilon' (n')^2=\varepsilon n^2$.
\end{proof}

\section{Concluding remarks}\label{sec:conclusion}
\subsection{The removal thresholds for other graphs}
Recall the definition of the linear and polynomial removal thresholds from \cref{def:removal-thresholds}. In this subsection, we make some remarks about the values of $\linrem(H)$ and $\polyrem(H)$ for more general classes of graphs. 

We begin by observing, directly from the definition, that $\polyrem(H) \leq \linrem(H)$ for any $H$, since a linear bound on the removal lemma is in particular a polynomial bound. Moreover, if $\ex(n,H)$ denotes the \emph{extremal number} of $H$, that is the maximum number of edges in an $H$-free graph on $n$ vertices, and if $\pi(H)\coloneqq\lim_{n \to \infty}\binom n2 \inv \ex(n,H)$ is the \emph{Tur\'an density} of $H$, then we have that $\linrem(H)\leq \pi(H)$. Indeed, this follows from the Erd\H os--Simonovits supersaturation theorem \cite{MR726456}, which implies that for any $\alpha>0$, there exists some $\delta_0>0$ such that every $n$-vertex graph $G$ with minimum degree at least $(\pi(H)+\alpha)n$ has at least $\delta_0 n^{\ab{V(H)}}$ copies of $H$. This shows that $\delta(\varepsilon,H;\pi(H)+\alpha)\geq \delta_0$ for all $\varepsilon>0$, which in turn implies that $\linrem(H)\leq \pi(H)+\alpha$ by taking $\mu=\delta_0$ in \cref{def:removal-thresholds}. Letting $\alpha$ tend to zero yields that $\linrem(H) \leq \pi(H)$.

Our first result in this section shows that $\polyrem$ is invariant under a certain natural relation on graphs. Recall that a graph homomorphism $H_2 \to H_1$ is a function $V(H_2) \to V(H_1)$ that maps every edge of $H_2$ to an edge of $H_1$.
\begin{prop}\label{prop:core-invariance}
	If $H_1$ is a subgraph of $H_2$ and there exists a homomorphism $H_2 \to H_1$, then
	\[
		\polyrem(H_1) = \polyrem(H_2).
	\]
\end{prop}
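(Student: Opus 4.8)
The plan is to sandwich $H_2$ between $H_1$ and a blow-up of $H_1$, and to transfer the removal behaviour back and forth along this sandwich. Since $H_2$ admits a homomorphism to $H_1$, and a homomorphism can never map an edge to a loop, this map lifts to an \emph{embedding} of $H_2$ into the blow-up $H_1[t]$ with $t=\ab{V(H_2)}$ (send the vertices of $H_2$ that map to $a\in V(H_1)$ to distinct copies inside the $a$-class). Together with the hypothesis $H_1\subseteq H_2$, this gives $H_1\subseteq H_2\subseteq H_1[t]$. Composing the projection $H_1[t]\to H_1$ with the inclusion $H_1\hookrightarrow H_2$ is a homomorphism $H_1[t]\to H_2$, so the same lifting argument also gives $H_1[t]\subseteq H_2[t']$ with $t'=\ab{V(H_1[t])}$.

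The first ingredient is a \emph{blow-up transfer lemma}: if $A\subseteq B\subseteq A[s]$, then $\polyrem(A)\le\polyrem(B)$. To prove it, take a graph $G$ on $n$ vertices with minimum degree at least $\gamma n$, with fewer than $\delta n^{\ab{V(A)}}$ copies of $A$, but requiring more than $\varepsilon n^2$ edge deletions to become $A$-free, and pass to the balanced blow-up $G[s]$; this is analogous to the tensor-product trick in \cref{lem:make-tripartite}. Then $G[s]$ has $sn$ vertices and minimum degree at least $\gamma(sn)$; since $A\subseteq B$ and every copy of $A$ inside $G[s]$ projects to a homomorphism $A\to G$, a short count shows that $G[s]$ has at most $(\delta+o(1))(sn)^{\ab{V(B)}}$ copies of $B$; and if deleting an edge set $E^*$ makes $G[s]$ be $B$-free, then the edge set $E_0^*$ of $G$ obtained by projecting $E^*$ (which is no larger) makes $G$ be $A$-free --- otherwise $(G-E_0^*)[s]\supseteq A[s]\supseteq B$, while $(G-E_0^*)[s]$ is a subgraph of $G[s]-E^*$, a contradiction. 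Hence $G[s]$ needs more than $\varepsilon n^2=(\varepsilon/s^2)(sn)^2$ deletions, so $\delta(\varepsilon/s^2,B;\gamma)\le 2\,\delta(\varepsilon,A;\gamma)$ once $\delta$ is taken down to $\delta(\varepsilon,A;\gamma)$ and $n$ is large; unwinding \cref{def:removal-thresholds} turns this into $\polyrem(A)\le\polyrem(B)$.

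The second ingredient is a \emph{blow-up density lemma}: $\polyrem(A[s])\le\polyrem(A)$ for every $A$ and $s$. Fix $\gamma>\polyrem(A)$, and let $G$ have minimum degree at least $\gamma n$ and fewer than $\delta' n^{s\ab{V(A)}}$ copies of $A[s]$. Writing $A[s]$ as the result of $m=(s-1)\ab{V(A)}$ successive vertex-cloning operations applied to $A$, and applying the Cauchy--Schwarz inequality to each clone step, one gets $\hom(A[s],G)\ge \hom(A,G)^{2^m}/n^{a_m}$ for an explicit constant $a_m$ (in fact $a_m=(2^m-1)\ab{V(A)}-m$, arranged so that the exponents balance); inverting this inequality shows $G$ has at most $O\big((\delta')^{2^{-m}}\big)n^{\ab{V(A)}}$ copies of $A$. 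Since $\gamma>\polyrem(A)$, choosing $\delta'$ to be an appropriate polynomial in $\varepsilon$ then lets us delete at most $\varepsilon n^2$ edges to make $G$ be $A$-free, hence $A[s]$-free. Because $2^{-m}$ is a constant, a polynomial bound for $A$ stays polynomial for $A[s]$, giving $\polyrem(A[s])\le\gamma$. Chaining the three inequalities $\polyrem(H_1)\le\polyrem(H_2)$ (transfer lemma applied to $(H_1,H_2)$), $\polyrem(H_2)\le\polyrem(H_1[t])$ (transfer lemma applied to $(H_2,H_1[t])$), and $\polyrem(H_1[t])\le\polyrem(H_1)$ (density lemma) yields $\polyrem(H_1)=\polyrem(H_2)$.

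The step I expect to be the main obstacle is the polynomial dependence in the blow-up density lemma: that ``$G$ has many copies of $A$'' forces ``$G$ has polynomially many copies of $A[s]$'', with a constant exponent. A polynomial loss here is harmless for $\polyrem$ but would be fatal for $\linrem$, which is exactly why the proposition is stated for $\polyrem$ and not for $\linrem$. A secondary technical point is that everything above should be read in the regime of large $n$; one must check that small host graphs do not spoil the bounds, which is handled in the standard way by first passing to a large blow-up and absorbing the resulting constant factors into $\varepsilon$.
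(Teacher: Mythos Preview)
Your proposal is correct, and the two core ideas---blowing up the host graph to pass from $H_1$-counts to $H_2$-counts, and using a supersaturation argument to pass the other way---match the paper's proof. The organization differs, though. The paper proves the two inequalities directly: for $\polyrem(H_1)\le\polyrem(H_2)$ it does exactly your transfer-lemma step, and for $\polyrem(H_2)\le\polyrem(H_1)$ it argues that few copies of $H_2$ in $G$ force few copies of $H_1$ in $G$ (via Erd\H os's complete $h_1$-partite $h_1$-uniform hypergraph count), and then observes that an $H_1$-free subgraph is automatically $H_2$-free. You instead route the second direction through the intermediate blow-up $H_1[t]$, invoking the transfer lemma a second time and then a separate ``density lemma'' $\polyrem(H_1[t])\le\polyrem(H_1)$ proved by iterated Cauchy--Schwarz on vertex clones. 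Your packaging is more modular (the two lemmas are reusable as black boxes), while the paper's is more direct: it avoids the extra host blow-up $G\mapsto G[t']$ and yields a better polynomial exponent ($m=s_1\cdots s_{h_1}$ rather than $2^{(t-1)h_1}$), though of course any polynomial suffices for $\polyrem$. The Cauchy--Schwarz cloning argument and Erd\H os's hypergraph argument are essentially the same convexity computation dressed differently, so there is no real gap in difficulty between the two supersaturation tools.
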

\begin{proof}
	Let $h_1 = \ab{V(H_1)}, h_2 = \ab{V(H_2)}$, and fix a graph homomorphism $\varphi:H_2 \to H_1$. Let $s_1,\dotsc,s_{h_1}$ be the sizes of the fibers of $\varphi$, i.e.\ $s_i = \ab{\varphi\inv(v_i)}$ for $v_i \in V(H_1)$. Let $s = \max \{s_1,\dotsc,s_{h_1}\}$, so that $H_2$ is a subgraph of the blowup $H_1[s]$. 

	We first prove that $\polyrem(H_1) \leq \polyrem(H_2)$. For this, let $G$ be an $n$-vertex graph with minimum degree $\gamma n$ that has at most $\delta n^{h_1}$ copies of $H_1$, such that at least $\varepsilon n^2$ edges must be removed from $G$ to make it $H_1$-free. By the same argument as in the first paragraph of \cref{lem:below-threshold-r=3}, we may assume without loss of generality that $n$ is sufficiently large. The number of non-injective homomorphisms $H_1 \to G$ is at most $\binom{h_1}{2} n^{h_1-1}$, which is at most $\delta n^{h_1}$ for $n$ sufficiently large. So the number of homomorphisms $H_1 \to G$ is at most $2 \delta n^{h_1}$, which implies that the number of copies of $H_1$ in the blowup $G[s]$ is at most $2 \delta (sn)^{h_1}$. Every copy of $H_1$ in $G[s]$ can be extended to a copy of $H_2$ in at most $(sn)^{h_2-h_1}$ ways, which implies that $G[s]$ contains at most $2 \delta(sn)^{h_2}$ copies of $H_2$. An $H_2$-free subgraph $\Gamma \subseteq G[s]$ yields an $H_1$-free subgraph of $G$ by keeping those edges of $G$ all of whose lifts are present in $\Gamma$; any copy of $H_1$ in this subgraph would lift to a copy of $H_1[s] \supseteq H_2$ in $\Gamma$, a contradiction. This implies that at least $\varepsilon n^2 = \frac \varepsilon{s^2}(sn)^2$ edges must be deleted from $G[s]$ to make it $H_2$-free. Since $G[s]$ has minimum degree $\gamma(sn)$ and $sn$ vertices, we conclude that
	\[
		\delta(\varepsilon, H_1; \gamma) \geq \frac 12 \delta \left( \frac \varepsilon{s^2}, H_2; \gamma \right) ,
	\]
	and therefore that $\polyrem(H_1) \leq \polyrem(H_2)$. 

	For the reverse inequality, now let $G$ be an $n$-vertex graph with minimum degree $\gamma n$ that has at most $\delta n^{h_2}$ copies of $H_2$, such that at least $\varepsilon n^2$ edges must be removed from $G$ to make it $H_2$-free, and we again assume that $n$ is sufficiently large. Since an $H_1$-free subgraph of $G$ is also $H_2$-free, we see that at least $\varepsilon n^2$ edges must be removed from $G$ to make it $H_1$-free. Let $m = s_1\dotsb s_{h_1}$. We claim that $G$ has at most $3h_1^{h_1} \delta^{1/m} n^{h_1}$ copies of $H_1$. For if not, then we can randomly partition $V(G)$ into $h_1$ sets to obtain an $h_1$-partite subgraph $G'$ with at least $3 \delta^{1/m} n^{h_1}$ canonical copies of $H_1$, where we say that a copy is \emph{canonical} if the $i$th vertex of $H_1$ lies in the $i$th part of $G'$ for all $i \in [h_1]$. Let $\h$ be the $h_1$-uniform hypergraph on $V(G)$ whose edges are the canonical copies of $H_1$ in $G'$, so that $\h$ has edge density at least $\eta \coloneqq 3 \delta^{1/m}$. An argument of Erd\H os \cite{MR183654} (see also \cite{HypergraphSidorenko}) implies that if $\K$ is a complete $h_1$-partite $h_1$-uniform hypergraph with $m$ edges, then there at least $\eta^m n^{\ab{V(\K)}}$ homomorphisms $\K \to \h$. For $n$ sufficiently large, and taking $\K$ to have parts of size $s_1,\dotsc,s_{h_1}$, we conclude that $\h$ has at least $\frac 12 \eta^m n^{h_2} > \delta n^{h_2}$ copies of $\K$. This implies that $G'$ contains more than $\delta n^{h_2}$ copies of $H_2$, a contradiction. We conclude that
	\[
		\delta(\varepsilon, H_2; \gamma) \geq \frac{1}{3h_1^{h_1}} \delta(\varepsilon, H_1;\gamma)^m,
	\]
	and therefore that $\polyrem(H_2) \leq \polyrem(H_1)$. 
\end{proof}

Using \cref{prop:core-invariance}, we can determine the polynomial removal threshold of many graphs. For instance, if $H$ is a non-bipartite graph whose clique number $\omega(H)$ equals its chromatic number $\chi(H)$, then
\[
	\polyrem(H) = \frac{2 \omega(H)-5}{2\omega(H)-3},
\]
since $K_{\omega(H)}$ is a subgraph of $H$ and there is a homomorphism $H \to K_{\omega(H)}$. In particular, we are able to determine the polynomial removal threshold of all perfect graphs.

Additionally, \cref{prop:core-invariance} implies that $\polyrem(H)=\polyrem(H[s])$ for any graph $H$ and any integer $s\geq 1$, that is that the polynomial removal threshold is invariant under blowups. More generally, we recall that the \emph{core} of a graph $H$ is defined as the inclusion-minimal subgraph $K$ such that there exists a homomorphism $H \to K$; see e.g.\ \cite{MR1192374} for more on this concept. Then by \cref{prop:core-invariance}, we see that $\polyrem(H)=\polyrem(K)$ for any graph $H$ and its core $K$. In other words, the polynomial removal threshold of a graph is completely determined by that of its core.

In contrast to the above results, the linear removal threshold does not satisfy such a nice invariance property. Indeed, our next result demonstrates that for any $r \geq 3$ and $s \geq 2$, the complete multipartite graph $K_r[s]$ has linear removal threshold $\frac{r-2}{r-1}$; this equals the Tur\'an density $\pi(K_r[s])$, and is strictly larger than $\linrem(K_r)=\frac{2r-5}{2r-3}$.
\begin{prop}\label{prop:complete-multipartite}
	For any $r\geq 3$ and $s \geq 2$,
	\[
		\linrem(K_r[s]) = \frac{r-2}{r-1}.
	\]
\end{prop}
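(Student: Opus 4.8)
The plan is to establish $\linrem(K_r[s])\le\frac{r-2}{r-1}$ and $\linrem(K_r[s])\ge\frac{r-2}{r-1}$ separately. The first inequality is immediate from the discussion above: $\linrem(H)\le\pi(H)$ for every graph $H$, and since $\chi(K_r[s])=r$, the Erd\H os--Stone theorem gives $\pi(K_r[s])=1-\frac1{r-1}=\frac{r-2}{r-1}$.

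For the second inequality, fix $\gamma<\frac{r-2}{r-1}$; I will show that for every $\mu>0$ there is an $\varepsilon\in(0,1)$ with $\delta(\varepsilon,K_r[s];\gamma)<\mu\varepsilon$, so that $\gamma$ does not lie in the set defining $\linrem(K_r[s])$. The construction parallels the one used for \cref{thm:below-threshold}, one ``level'' higher: let $t$ be a large integer (chosen last), let $F$ be the disjoint union of $t$ copies of $K_{s,s}$ (so $F$ has $2ts$ vertices, is bipartite, and contains exactly $t$ copies of $K_{s,s}$, namely its components), and let $G$ be the join of $F$ with the complete $(r-2)$-partite graph $T$ with all parts of size $2ts$. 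Then $G$ has $n\coloneqq 2(r-1)ts$ vertices, every vertex of $T$ has degree $n-2ts=\frac{r-2}{r-1}n$, and every vertex of $F$ has degree at least $n-2ts=\frac{r-2}{r-1}n>\gamma n$, so $G$ has the required minimum degree.

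The key point is the following structural description of the copies of $K_r[s]$ in $G$. Since $G=F+T$ is a join, each colour class of a copy of $K_r[s]$, being independent, lies entirely in $F$ or entirely in $T$; if $a$ of the colour classes lie in $F$ and $b$ lie in $T$, then $F$ contains $K_a[s]$ and $T$ contains $K_b[s]$, so $a\le 2$ (as $F$ is bipartite, hence $K_3[s]$-free) and $b\le r-2$ (as $T$ is $(r-2)$-partite, hence $K_{r-1}[s]$-free); since $a+b=r$ this forces $a=2$ and $b=r-2$. Thus \emph{every} copy of $K_r[s]$ in $G$ is obtained by choosing one of the $t$ copies of $K_{s,s}$ in $F$ and one copy of $K_{r-2}[s]$ in $T$ (that is, one $s$-subset from each part of $T$) and taking their join. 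A routine count now gives that $G$ contains $\Theta_{r,s}\!\big(t\cdot(ts)^{(r-2)s}\big)=\Theta_{r,s}\!\big(t^{-(2s-1)}n^{rs}\big)$ copies of $K_r[s]$, and that the maximum number of them through a single edge of $G$ is $\Theta_{r,s}\!\big((ts)^{(r-2)s}\big)=\Theta_{r,s}\!\big(t^{(r-2)s}\big)$, attained by the edges of $F$ (an edge incident to $T$ lies in a factor $\Theta_{r,s}(t)$ fewer copies, as it no longer pins down the $K_{s,s}$ part). Hence at least $\Omega_{r,s}(t)$ edge deletions are needed to destroy all copies of $K_r[s]$; since deleting one edge from each of the $t$ components of $F$ already destroys all of them, the exact number is $\Theta_{r,s}(t)$.

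Combining these, there are constants $C,c>0$ depending only on $r$ and $s$ such that $G$ has at most $Ct^{-(2s-1)}n^{rs}$ copies of $K_r[s]$ but cannot be made $K_r[s]$-free by deleting fewer than $ct$ edges. Setting $\varepsilon=\varepsilon_t\coloneqq\frac{ct}{2n^2}=\Theta_{r,s}(1/t)$, this graph witnesses $\delta(\varepsilon_t,K_r[s];\gamma)\le Ct^{-(2s-1)}=\Theta_{r,s}\!\big(\varepsilon_t^{\,2s-1}\big)$. Since $s\ge 2$ we have $2s-1\ge 3>1$, so $\varepsilon_t^{\,2s-1}=o(\varepsilon_t)$; hence, given any $\mu>0$, taking $t$ large enough gives $\delta(\varepsilon_t,K_r[s];\gamma)<\mu\varepsilon_t$ with $\varepsilon_t\in(0,1)$, as required. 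This proves $\linrem(K_r[s])\ge\frac{r-2}{r-1}$, completing the argument. The only substantive step is the structural description in the third paragraph: it is exactly what makes the copies of $K_r[s]$ simultaneously few (each one must route through one of the $t$ small bipartite gadgets in $F$) and collectively hard to eliminate, and the fact that the resulting exponent $2s-1$ exceeds $1$ --- which is precisely where $s\ge 2$ is used --- is what makes the bound super-linear rather than linear.
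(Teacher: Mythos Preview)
Your proof is correct and takes a genuinely different route from the paper's for the lower bound. The paper places a sparse random graph $G(n/(r-1),p)$ inside one part $S_1$ of the Tur\'an graph $T(n,r-1)$; it then needs an edge-switching argument together with the K\H ov\'ari--S\'os--Tur\'an theorem to show that a maximum $K_r[s]$-free subgraph must be $K_{s,s}$-free on $S_1$, from which the lower bound on deleted edges follows. Your construction instead uses an \emph{explicit} gadget---a disjoint union of $t$ copies of $K_{s,s}$---joined to a balanced complete $(r-2)$-partite graph. Because these $K_{s,s}$'s are edge-disjoint and each edge of $G$ lies in at most $\Theta(t^{(r-2)s})$ copies of $K_r[s]$ while there are $\Theta(t^{(r-2)s+1})$ copies in total, a straightforward copies-per-edge count immediately gives the $\Omega(t)$ lower bound on deletions, with no appeal to K\H ov\'ari--S\'os--Tur\'an or to any extremal subgraph argument. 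This makes your proof shorter and fully elementary. In exchange, the paper's random construction yields a quantitatively stronger super-linear gap ($\delta$ of order $\varepsilon^{s^2}$ versus your $\varepsilon^{2s-1}$), though for the purposes of the proposition any exponent larger than $1$ suffices.
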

\begin{proof}
	As remarked above, the inequality $\linrem(K_r[s])\leq \frac{r-2}{r-1}$ follows from the Erd\H os--Stone theorem \cite{MR18807}, which says that $\pi(K_r[s])=\frac{r-2}{r-1}$. For the reverse inequality, it suffices to construct an $n$-vertex graph $G$ with minimum degree at least $\frac{r-2}{r-1}n$ and fewer than $\delta n^{rs}$ copies of $K_r[s]$, but such that at least $\varepsilon n^2$ edges must be deleted to make it $K_r[s]$-free, where $\varepsilon$ cannot be taken to depend linearly on $\delta$. We may assume throughout that $n$ is sufficiently large.

	To do so, we let $T(n,r-1)$ denote the Tur\'an graph, that is the complete $(r-1)$-partite graph with parts of size $\ab{S_1}=\dotsb=\ab{S_{r-1}}=\frac{n}{r-1}$ (where we assume for simplicity that $2(r-1)$ divides $n$). Inside the part $S_1$ of $T(n,r-1)$, we place a random graph $G(\frac{n}{r-1},p)$ for some fixed $p \in (0,\frac{1}{4s^2})$; in other words, we connect every pair in $S_1$ by an edge with probability $p$, independently over all these choices. We let $G$ be the resulting graph. Then we immediately see that $G$ has minimum degree at least $\frac{r-2}{r-1}n$, since that was the case in $T(n,r-1)$. For clarity, we now fix $G[S_1]$ to be a graph where every vertex has degree $(p+o(1))\ab{S_1}$ and the $K_{s,s}$ density in $G[S_1]$ is $p^{s^2}+o(1)$. This is possible since both these properties hold in $G(\frac{n}{r-1},p)$ with high probability. Note that every copy of $K_r[s]$ in $G$ must contain a copy of $K_{s,s}$ in $S_1$. Therefore, $G$ contains at most $\delta n^{rs}$ copies of $K_r[s]$, where $\delta =p^{s^2}$. 

	Now, suppose that $G'$ is a $K_r[s]$-free subgraph of $G$ with the maximum possible number of edges. Recall that in $G$, every vertex of $S_1$ has degree at most $2p \frac{n}{r-1}$ inside $S_1$. Therefore, if any vertex $v \in S_1$ has more than $2p \frac{n}{r-1}$ non-neighbors in $S_2 \cup \dotsb \cup S_{r-1}$, we can find a $K_r[s]$-free subgraph of $G$ with more edges than $G'$ by deleting all edges incident to $v$ in $S_1$ and adding all missing edges to $S_2 \cup \dotsb \cup S_{r-1}$. 

	We now show $G'[S_1]$ is $K_{s,s}$-free. Suppose not, and consider a copy of $K_{s,s}$ in $G'[S_1]$. Since every vertex of this $K_{s,s}$ has at most $2p \frac{n}{r-1}$ non-neighbors in $S_2\cup \dotsb \cup S_{r-1}$, we find that the vertices of the $K_{s,s}$ have in total at most $2s\cdot 2p \frac{n}{r-1}$ non-neighbors outside $S_1$. As $2s \cdot 2p \frac{n}{r-1} \leq \frac 12 \ab{S_i}$ since $p<1/(4s^2) \leq 1/(8s)$, we see that there are subsets $S_i' \subseteq S_i$ for $2 \leq i \leq r-1$ such that $\ab{S_i'} = \frac 12 \ab{S_i}$ and every vertex in $S_i'$ is complete to the vertices in the $K_{s,s}$. If $r=3$, we arrive at a contradiction as $\ab{S_2'} \geq s$ for $n$ sufficiently large, so the $K_{s,s}$ together with $s$ vertices from $S_2'$ yields a copy of $K_3[s]$. We now assume $r \geq 4$. As $G'$ is $K_r[s]$-free, $G'[S_2' \cup \dotsb \cup S_{r-1}']$ does not contain a copy of $K_{r-2}[s]$. 
	Now suppose we sample $s$ random vertices from each $S_i'$. These $(r-2)s$ vertices cannot span a copy of $K_{r-2}[s]$, so these vertices must include both endpoints of at least one edge that is present in $G$ but not $G'$. On the other hand, if there are $q_{ij}\ab{S_i'}\ab{S_j'}$ such deleted edges of between $S_i'$ and $S_j'$ for all $2 \leq i \neq j \leq r-1$, then the expected number of deleted edges among the $(r-2)s$ sampled vertices is $s^2 \sum_{i \neq j} q_{ij}$. As this expectation must be at least $1$, we conclude that the number of edges deleted when passing from $G$ to $G'$ is at least $\sum_{i \neq j} q_{ij} \ab{S_i'}\ab {S_j'} = \frac{1}{4}(\frac{n}{r-1})^2 \sum_{i \neq j}q_{ij} \geq \frac{1}{4s^2} (\frac{n}{r-1})^2$.
	However, this is more than the number of edges in $G[S_1]$, so the graph obtained from $G$ by deleting all edges inside $S_1$ has more edges than $G'$ and is also $K_r[s]$-free, a contradiction. Thus, $G'[S_1]$ is $K_{s,s}$-free.

	Any $K_{s,s}$-free subgraph of $G[S_1]$ has at most $O(n^{2-1/s})$ edges by the K\H ov\'ari--S\'os--Tur\'an theorem \cite{MR65617}, so we conclude that at least $\frac 12 p \binom{\ab{S_1}}{2}\geq  \varepsilon n^2$ edges must have been deleted when going from $G$ to $G'$, where $\varepsilon = \frac{p}{4r^2}$. Since $G$ has at most $\delta n^{rs}$ copies of $K_r[s]$, where $\delta = p^{s^2}$, we see that the dependence between $\varepsilon$ and $\delta$ cannot be linear. 
\end{proof}

To conclude this subsection, we turn our attention to cycles. Since an even cycle is bipartite, its Tur\'an density is $0$, and hence so are its linear and polynomial removal thresholds. For odd cycles, we are able to prove the following lower bound, though we do not know if it is tight. The construction is a natural and simple generalization of that in \cref{lem:below-threshold-r=3}, which corresponds to the case $k=1$ in the following theorem. 

\begin{thm}\label{thm:odd-cycle}
	For every positive integer $k$, we have that $\polyrem(C_{2k+1})\geq \frac{1}{2k+1}$. 
\end{thm}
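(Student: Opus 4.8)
The plan is to mimic the proof of \cref{lem:below-threshold-r=3} (which is exactly the case $k=1$), replacing the triangle by $C_{2k+1}$ everywhere. Fix $\varepsilon,\alpha>0$. I start from an unrestricted $C_{2k+1}$-removal example: by definition of $\delta(\varepsilon_0,C_{2k+1})$, for any $\varepsilon_0>0$ there is a graph $H_0$ on $m_0$ vertices --- which, by taking a blowup, I may assume is as large as I wish --- with at most $\tfrac32\delta(\varepsilon_0,C_{2k+1})m_0^{2k+1}$ copies of $C_{2k+1}$ but requiring at least $\varepsilon_0 m_0^2$ edge deletions to become $C_{2k+1}$-free. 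The first step is the analog of \cref{lem:make-tripartite}: the tensor product $H:=H_0\times C_{2k+1}$ is \emph{cyclically $(2k+1)$-partite}, meaning its vertex set splits into classes $B_1,\dotsc,B_{2k+1}$ with edges only between consecutive $B_i$ (indices mod $2k+1$). Since every homomorphism $C_{2k+1}\to H$ is injective (its $C_{2k+1}$-coordinate is a homomorphism $C_{2k+1}\to C_{2k+1}$, hence an automorphism), the number of copies of $C_{2k+1}$ in $H$ equals $\lvert\mathrm{Hom}(C_{2k+1},H_0)\rvert$, which is at most $2(2k+1)$ times the number of copies in $H_0$ plus a lower-order term $O_k(m_0^{2k})$ counting degenerate closed walks; and a lifting argument exactly as in \cref{lem:make-tripartite} shows that making $H$ be $C_{2k+1}$-free forces $H_0$ to be made $C_{2k+1}$-free, so $H$ needs at least $\varepsilon_0 m_0^2$ deletions. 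Taking $m_0\ge 1/\delta(\varepsilon_0,C_{2k+1})$ makes the lower-order term negligible, so the density of copies of $C_{2k+1}$ in $H$ is $O_k(\delta(\varepsilon_0,C_{2k+1}))$.

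Next I pad $H$ to raise the minimum degree. Let $\Gamma$ be a balanced blowup of the path $P_{2k+1}$ on vertices $g_1-g_2-\dotsb-g_{2k+1}$ with all classes of size $M$, and let $G$ be the disjoint union of $H$ and $\Gamma$ together with a complete bipartite graph between $B_i$ and the $i$-th class of $\Gamma$ for each $i$. Every vertex of $G$ is complete to some class of $\Gamma$, so $G$ has minimum degree at least $M$; choosing $M$ large compared to $\lvert V(H)\rvert$ makes this at least $(\tfrac1{2k+1}-\alpha)n$, where $n=\lvert V(G)\rvert$. The key claim is that \emph{every copy of $C_{2k+1}$ in $G$ lies inside $H$}. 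To see this, let $\mathcal B$ be the ``blueprint'' graph whose vertices are the classes $B_1,\dotsc,B_{2k+1},g_1,\dotsc,g_{2k+1}$, with a $(2k+1)$-cycle on the $B_i$, a path on the $g_i$, and a perfect matching $B_i\sim g_i$; assigning each vertex of $G$ to its class gives a homomorphism $G\to\mathcal B$. One checks that $\varphi\colon\mathcal B\to C_{2k+1}$ with $\varphi(B_i)=i$ and $\varphi(g_i)\equiv i+1$ is a homomorphism, so composing gives a homomorphism $G\to C_{2k+1}$; its restriction to any copy of $C_{2k+1}$ in $G$ is a homomorphism $C_{2k+1}\to C_{2k+1}$, hence an automorphism, hence injective, so the copy meets each fibre of $\varphi$ exactly once. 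A short case analysis --- crucially using that $g_{2k+1}$ is an endpoint of the $g$-path, so it has no ``$g$-successor'' --- then shows the copy cannot use any vertex of a $g$-class: once it enters the $g$-classes it is forced to march along the path and must eventually leave $g_{2k+1}$, which is impossible. Hence the copy uses one vertex from each $B_i$ with consecutive vertices adjacent in $G$, i.e. it is a copy of $C_{2k+1}$ inside $H$. In particular $G$ has exactly as many copies of $C_{2k+1}$ as $H$, and making $G$ be $C_{2k+1}$-free forces $H$, hence $H_0$, to be made $C_{2k+1}$-free.

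Combining everything: choosing $\varepsilon_0=c_{k,\alpha}\varepsilon$ appropriately, the graph $G$ has minimum degree at least $(\tfrac1{2k+1}-\alpha)n$, requires at least $\varepsilon n^2$ deletions to become $C_{2k+1}$-free, and contains at most $O_{k,\alpha}\!\big(\delta(c_{k,\alpha}\varepsilon,C_{2k+1})\big)n^{2k+1}$ copies of $C_{2k+1}$, so $\delta\big(\varepsilon,C_{2k+1};\tfrac1{2k+1}-\alpha\big)\le O_{k,\alpha}\!\big(\delta(c_{k,\alpha}\varepsilon,C_{2k+1})\big)$. Since $C_{2k+1}$ is not bipartite, $\delta(\cdot,C_{2k+1})$ is super-polynomially small by Alon's bound mentioned in the introduction, so $\delta(\varepsilon,C_{2k+1};\gamma)$ is super-polynomially small for every fixed $\gamma<\tfrac1{2k+1}$, which is exactly the statement $\polyrem(C_{2k+1})\ge\tfrac1{2k+1}$. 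I expect the main obstacle to be the blueprint argument of the second paragraph, i.e. verifying that the padding introduces no new copies of $C_{2k+1}$; the remaining points --- the bookkeeping ensuring the tensor-product correction term does not swamp the super-polynomial bound (handled by blowing $H_0$ up first), the choice of $M$ and of $n$, and all floors and ceilings --- are routine, as in \cref{lem:below-threshold-r=3}.
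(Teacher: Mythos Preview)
Your argument is correct. The padding construction is exactly the one the paper uses, and your blueprint homomorphism $G\to\mathcal B\to C_{2k+1}$ gives a clean verification that the padding creates no new copies of $C_{2k+1}$ (a point the paper asserts but does not spell out). The two proofs differ in how they obtain the cyclically $(2k+1)$-partite input graph $H$: the paper quotes Alon's construction directly, which already comes equipped with the partition $V_0\sqcup\dotsb\sqcup V_{2k}$ and the explicit super-polynomial bound $\delta=\varepsilon_0^{-c\log\varepsilon_0}$, while you start from an arbitrary $C_{2k+1}$-removal witness and force the cyclic structure by tensoring with $C_{2k+1}$, exploiting that $C_{2k+1}$ is a core so that the projection of any homomorphism onto the second coordinate is injective. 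Your route is a bit longer (it requires the blowup-to-large-$m_0$ step to absorb the $O_k(m_0^{2k})$ degenerate-walk term), but it yields the sharper conclusion
\[
\delta\!\left(\varepsilon,\,C_{2k+1};\,\tfrac{1}{2k+1}-\alpha\right)\le O_{k,\alpha}\!\bigl(\delta(c_{k,\alpha}\varepsilon,\,C_{2k+1})\bigr),
\]
an inequality of the same shape as \cref{thm:below-threshold-extended} for cliques, from which the super-polynomial lower bound then follows via Alon's theorem. The paper's version trades this extra information for brevity.
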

\begin{proof}
	For every $k \geq 1$, every sufficiently small $\varepsilon_0>0$, and every sufficiently large $m$, Alon \cite{MR1945375} constructed a graph $H$ on $m$ vertices with vertex set $V_0 \sqcup \dotsb \sqcup V_{2k}$ with the following properties. Every edge in $H$ goes between $V_i$ and $V_{i+1}$ for some $i$ (with the indices taken modulo $2k+1$), at least $\varepsilon_0 m^2$ edges must be removed from $H$ to make it $C_{2k+1}$-free, and $H$ has at most $\delta m^{2k+1}$ copies of $C_{2k+1}$, where $\delta = {\varepsilon_0}^{-c \log {\varepsilon_0}}$,
	for some constant $c>0$ depending only on $k$. 

	We set $\varepsilon_0=\varepsilon/\alpha^2$ and $m=\alpha n$ and adjoin to this graph $H$ a balanced blowup $\Gamma$ of the path on $2k+1$ vertices, blown up so that $\Gamma$ has $(1- \alpha)n$ vertices in total. Finally, we place a complete bipartite graph between the $i$th part of $\Gamma$ and the $i$th part of $H$. Then the resulting graph has minimum degree at least $(\frac 1{2k+1} -\alpha)n$, and every $C_{2k+1}$ in this graph is contained in $H$. Therefore, this resulting graph has at most $\delta m^{2k+1}\leq \delta n^{2k+1}$ copies of $C_{2k+1}$, but at least $\varepsilon_0 m^2 = \varepsilon n^2$ edges must be removed to make it $C_{2k+1}$-free. Since $1/\delta$ is super-polynomial in $1/\varepsilon$, this gives the theorem. 
\end{proof}

\subsection{The popular edge threshold}
Our proof of \cref{thm:above-threshold} used \cref{lem:key-lemma}, which is a very natural way of proving linear bounds on the removal lemma. Recall that \cref{lem:key-lemma} says that if $G$ has minimum degree at least $(\frac{2r-5}{2r-3}+\alpha)n$, then every copy of $K_r$ in $G$ has a ``popular'' edge, namely an edge that lies in $\Omega (n^{r-2})$ copies of $K_r$. Given this statement, the proof of \cref{thm:above-threshold} is simple, since we simply delete a popular edge from each copy of $K_r$, which necessarily yields linear bounds for the $K_r$ removal lemma.

This discussion naturally leads to the following definition.
\begin{Def}
	Let $H$ be a graph. The \emph{popular edge threshold} of $H$ is defined as the infimum of all $\gamma \in [0,1]$ for which the following holds. There exists $\beta = \beta(\gamma)>0$ such that for every $n$-vertex graph $G$ with minimum degree at least $\gamma n$, every copy of $H$ in $G$ contains an edge which lies in at least $\beta n^{\ab{V(H)}-2}$ copies of $H$.
\end{Def}
From the same argument as before, we see that $\linrem(H) \leq \popedge(H)$, and \cref{lem:key-lemma} shows that $\popedge(K_r) \leq \frac{2r-5}{2r-3}$, which is a tight bound by \cref{thm:below-threshold}. However, in general, $\popedge(H)$ can be strictly larger than $\linrem(H)$, as shown in the following proposition. 
\begin{prop}\label{prop:popedge=0}
	Let $H$ be a graph with no isolated vertices. Then $\popedge(H)=0$ if and only if $H$ is bipartite and every edge of $H$ lies in a cycle.
\end{prop}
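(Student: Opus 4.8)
The plan is to prove both directions separately. For the easy direction, suppose $H$ is bipartite with minimum degree $1$, so $H$ has a vertex $u$ of degree exactly $1$; call its unique neighbor $w$, so $\{u,w\}$ is an edge of $H$. I claim $\popedge(H)=0$, meaning that for \emph{every} $\gamma>0$ there is $\beta>0$ with the popular-edge property (in fact I will get the stronger statement that it holds even with $\gamma=0$, i.e.\ with no minimum degree hypothesis at all). The point is that $H$ is bipartite, so by the Erd\H os--Simonovits supersaturation / K\H ov\'ari--S\'os--Tur\'an machinery alluded to in the footnote on bipartite removal, any $n$-vertex graph $G$ containing even one copy of $H$ is dense enough along the relevant edge to contain $\Omega(n^{\ab{V(H)}-2})$ copies through it. Concretely: fix a copy of $H$ in $G$ and look at its edge $e$ corresponding to $\{u,w\}$. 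Since $u$ has degree $1$ in $H$, the subgraph $H-u$ embeds into $G$ using the same vertices minus the image of $u$, and to complete this to a copy of $H$ through $e$ we only need to re-choose the image of $u$ to be any common neighbor of $w$'s image that avoids the (finitely many) already-used vertices. Rather than arguing one copy at a time I would instead count homomorphisms: if $G$ has at least one copy of $H$ then $G$ has a subgraph isomorphic to $H$, in particular positive edge density bounded below by a function of $\ab{V(H)}$ on that vertex set, and a Kruskal--Katona / convexity estimate then forces $G$ to contain $\Omega_H(n^{\ab{V(H)}-2})$ copies of $H$, each through a common edge — here one uses that for bipartite $H$, "one copy" already certifies a positive density, which is false for non-bipartite $H$. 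The cleanest route: delete $u$ from the fixed copy of $H$ in $G$, obtaining an embedding of $H-u$ on $\ab{V(H)}-1$ vertices; the image of $w$ has degree $\ge 1$ hence at least... — this is too weak in general, so instead average: in $G$, some vertex has degree $\ge 2\ab{E(G)}/n$; iterating this greedily inside neighborhoods and using that $H-u$ is itself bipartite and has at most $\ab{V(H)}$ vertices, one embeds $H-u$ in $G$ and extends to $\Omega(n)$ copies of $H$ through the distinguished edge. I expect the technical core here is precisely a clean supersaturation statement for bipartite $H$ with a pendant vertex, which should follow from the dependent random choice or KST methods referenced in the paper; since the paper explicitly invokes these for the bipartite removal lemma, I would cite that and extract the popular-edge consequence.

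For the harder converse direction, I must show: if $H$ has no isolated vertices and either $H$ is non-bipartite or $H$ has minimum degree $\ge 2$, then $\popedge(H)>0$. This requires constructing, for every small $\gamma>0$, an $n$-vertex graph $G$ of minimum degree $\ge \gamma n$ containing a copy of $H$ in which \emph{no} edge lies in more than $o(n^{\ab{V(H)}-2})$ copies of $H$ (indeed in which some copy of $H$ has an edge lying in zero other copies, or $\ll \beta n^{\ab{V(H)}-2}$ for every fixed $\beta$). I would split into the two cases. \textbf{Case $H$ non-bipartite:} take the construction idea from the proof of \cref{lem:below-threshold-r=3} and \cref{thm:odd-cycle} — a sparse "gadget" copy of $H$ (or of a shortest odd closed walk inside $H$) glued onto a balanced blowup of a long path or of a bipartite skeleton that supplies high minimum degree but admits no copy of $H$ except those passing through the sparse gadget edges. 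Because $H$ is non-bipartite, the blowup part cannot contain $H$, so every copy of $H$ uses the gadget; making the gadget sparse (e.g.\ a single copy of $H$, or the Behrend-type graphs) forces the distinguished gadget edge into only $o(n^{\ab{V(H)}-2})$ copies. \textbf{Case $H$ bipartite with $\delta(H)\ge 2$:} here one exploits that to place a copy of $H$ through a given edge $e=xy$, both endpoints of $e$ already need $\ge 1$ further neighbor \emph{inside the copy}, and more importantly every vertex of $H$ has $\ge 2$ neighbors, so a copy of $H$ cannot "hang off" a sparse region. I would take $G$ to be a blowup of a single edge (a complete bipartite graph $K_{\gamma n/2,\, n - \gamma n/2}$, say) union-ed with a tiny sparse copy of $H$ arranged so that one edge of that copy lies in no other copy: since $H$ bipartite with min degree $\ge 2$ contains a cycle, the relevant extremal behavior is governed by $\ex(n, C_{2\ell})$-type bounds and one can make the number of copies of $H$ through a fixed edge $O(n^{\ab{V(H)}-2-\eta})$ for some $\eta>0$, beating every fixed $\beta$.

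\textbf{Main obstacle.} I expect the genuine difficulty is the bipartite-with-minimum-degree-$\ge 2$ subcase of the converse: there the host graph $G$ is forced to be dense (high minimum degree), $G$ is allowed to, and essentially must, contain many copies of $H$, and one needs a copy with a \emph{locally sparse} edge despite global density — this cannot be achieved by a sparse-gadget trick as in the non-bipartite case, and instead needs a genuinely extremal construction (a $K_{s,t}$-free or $C_{2\ell}$-free pattern placed inside one side) together with a counting lemma showing no edge is "popular." The non-bipartite case and the forward direction I expect to be routine modulo citing the supersaturation and Alon-type constructions already used in the paper, so I would allocate most of the write-up to this bipartite converse, likely via a two-part blow-up plus incidence-geometry (Kővári–Sós–Turán) estimate, or by reducing to the known fact that even-cycle removal/supersaturation has polynomial but non-linear-through-a-fixed-edge behavior.
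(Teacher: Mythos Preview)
Your forward direction contains a genuine error: the parenthetical claim that the popular-edge property holds even at $\gamma=0$ is false (take $G$ to be a single copy of $H$ plus $n-\ab{V(H)}$ isolated vertices), and your subsequent sketches never actually use the minimum-degree hypothesis. The paper's argument does use it: with bipartition $A\cup B$, $\ab A=a$, $\ab B=b$, and the pendant vertex in $B$, one applies K\H ov\'ari--S\'os--Tur\'an to the auxiliary bipartite graph between $N(v)$ and $V(G)\setminus\{v\}$ --- which has $\Omega(\gamma n^2)$ edges precisely because of the minimum-degree condition --- to find $\Omega_{\gamma,a,b}(n^{a+b-2})$ copies of $K_{a,b-1}$ through $v$, and then the pendant attaches for free along any edge. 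A single copy of $H$ in $G$ gives no global density, so ``one copy certifies positive density'' is simply wrong.

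For the converse you misidentify the bipartite case as the hard one. The paper uses a \emph{single} construction for both cases: plant one copy of $H$ in the blowup $C_{h^2}[s]$ (where $h=\ab{V(H)}$), with the $h$ planted vertices in parts $0,h,2h,\dotsc,(h-1)h$. Since these parts are pairwise at distance at least $h$ in $C_{h^2}$, any cycle of length at most $h$ through a planted edge lies entirely in the planted copy. For non-bipartite $H$ this bounds the total number of copies of $H$ by $O_H(n^{h-3})$, essentially as you anticipated. For bipartite $H$ with minimum degree at least $2$, the key observation you are missing is that \emph{every edge of $H$ lies on a cycle of length at most $h$}; hence any copy of $H$ through a planted edge $e$ has at least four vertices pinned to the planted copy, giving only $O_H(n^{h-4})$ copies through $e$. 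No complete-bipartite host, no $\ex(n,C_{2\ell})$ bounds, and no incidence geometry are needed.
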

Recall that if $H$ is bipartite, then $\pi(H)=0$ and therefore $\linrem(H)=0$ as well. Thus, every bipartite graph in which every edge lies in a cycle is an example of a graph whose popular edge threshold is strictly larger than its linear removal threshold.
\begin{proof}[Proof of \cref{prop:popedge=0}]
	Let $h = \ab{V(H)}$. First suppose that $H$ is not bipartite or every edge of $H$ lies in a cycle. For any integer $s \geq 1$, consider the blowup $C_{h^2}[s]$, where we label the parts $0,1,\dotsc,h^2-1$. Form a graph $G$ by adding to $C_{h^2}[s]$ a single copy of $H$, with one vertex in each of the parts labeled $0,h,2h,\dotsc,h^2-h$. Then $G$ has $n\coloneqq h^2s$ vertices and minimum degree $2s = \frac{2}{h^2}n$. If $H$ is not bipartite, then the only odd cycles of length at most $h$ in $G$ are in the added copy of $H$. Since any odd cycle can be extended to a copy of $H$ in at most $O_H(n^{h-3})$ ways, we conclude that $G$ has at most $O_H(n^{h-3})$ copies of $H$. For any fixed $\beta>0$, by letting $s$ (and thus $n$) be sufficiently large, this implies that $G$ contains fewer than $\beta n^{h-2}$ copies of $H$, and in particular the added copy of $H$ has no popular edge. Similarly, if every edge of $H$ lies in a cycle, then in particular every edge lies in a cycle of length at most $h$. For any edge in the added copy of $H$, the only cycles of length at most $h$ it participates in are in the added copy, so any such edge appears in at most $O_H(n^{h-4})$ copies of $H$, again showing that this copy of $H$ has no popular edge. This implies that $\popedge(H) \geq \frac{2}{h^2}>0$.

	For the reverse implication, suppose that $H$ is bipartite and has an edge which lies in no cycle. Let $u_1 u_2 \in E(H)$ be such an edge, and note that its deletion makes $H$ disconnected. Let $H_1,H_2$ be induced subgraphs of $H$ containing $u_1$ and $u_2$, respectively, so that the only edge between $H_1$ and $H_2$ is $u_1 u_2$. For $i \in \{1,2\}$, let the bipartition of $H_i$ be $V(H_i) = A_i \cup B_i$, with $u_i \in A_i$. Let $\ab{A_i} = a_i, \ab{B_i}=b_i$, so that $h=a_1+b_1+a_2+b_2$. Let $\gamma>0$, and let $G$ be a graph with $n$ vertices and minimum degree at least $\gamma n$. We claim that for all positive integers $a,b$, and for any vertex $v \in V(G)$, there are at least $\Omega_{\gamma,a,b}(n^{a+b-1})$ copies of $K_{a, b}$ in $G$ which contain $v$ as one of the $a$ vertices in the first part. This follows from the supersaturation version \cite{MR726456} of the K\H ov\'ari--S\'os--Tur\'an theorem \cite{MR65617} on the problem of Zarankiewicz, which proves the existence of many copies of $K_{a-1,b}$ in the auxiliary bipartite graph with parts $V(G) \setminus \{v\}$ and $N(v)$, whose edges are given by adjacency in $G$. This implies that for any edge $v_1 v_2$ of $G$, there are at least $\Omega_{\gamma,a_1,b_1,a_2,b_2}(n^{h-2})$ copies of $H$ in $G$ containing the edge $v_1 v_2$, since $H$ is a subgraph of the graph obtained from the disjoint union of $K_{a_1,b_1}$ and $K_{a_2,b_2}$ by adding a single edge between $A_1$ and $A_2$.
	
	This shows that every edge in $G$ lies in at least $\Omega_{\gamma,H}(n^{h-2})$ copies of $H$. In particular, in any copy of $H$ in $G$, any edge is a popular edge, showing that $\popedge(H) \leq \gamma$. Letting $\gamma \to 0$ gives the desired result.
\end{proof}

This example demonstrates that our approach to upper-bounding the linear removal threshold via the popular edge threshold will not give tight bounds in general. Nevertheless, we think that it is interesting to study when this approach will yield a tight bound, i.e.\ to understand when $\linrem(H) = \popedge(H)$.
\begin{question}
	For which graphs $H$ does $\linrem(H) = \popedge(H)$? 
\end{question}

\subsection{The chromatic and homomorphism thresholds}
The number $\frac{2r-5}{2r-3}$, which emerges from \cref{thm:above-threshold,thm:below-threshold} as the linear and polynomial removal threshold of $K_r$, is a well-known number in extremal graph theory. Indeed, it turns out that $\frac{2r-5}{2r-3}$ is also both the \emph{chromatic threshold} and \emph{homomorphism threshold} of $K_r$. These are defined as follows. For a family of graphs $\F$ and a parameter $\gamma \in [0,1]$, let $\G(\F,\gamma)$ denote the set of $\F$-free graphs $G$ with minimum degree at least $\gamma \ab{V(G)}$. 
\begin{Def}
	Let $\F$ be a family of graphs. The \emph{chromatic threshold} of $\F$ is the number
	\[
		\delta_\chi(\F) = \inf \{\gamma \in [0,1]: \text{there exists }M>0 \text{ such that }\chi(G) \leq M\text{ for all } G \in \G(\F, \gamma)\}
	\]
	and the \emph{homomorphism threshold} of $\F$ is
	\begin{align*}
		\delta_{\hom}(\F) = \inf \{\gamma \in [0,1]:\text{there }&\text{exists an }\F\text{-free graph }G_0\text{ such that for all }G \in \G(\F,\gamma),\\
		&G\text{ has a homomorphism to }G_0\}.
	\end{align*}
	If $\F=\{F\}$ consists of a single graph, we denote these by $\delta_\chi(F)$ and $\delta_{\hom}(F)$.
\end{Def}
In other words, the chromatic threshold measures what minimum degree conditions force an $\F$-free graph to have a homomorphism to a graph of bounded order, and the homomorphism threshold further requires this bounded graph to itself be $\F$-free. Due to the efforts of many researchers \cite{MR342429,MR1956996,1001.2070,MR2260851,MR2791450,MR4152563}, it is now known that
\[
	\delta_\chi(K_r) = \delta_{\hom}(K_r) = \frac{2r-5}{2r-3}.
\]
Despite the fact that we get the same answer for all cliques, we were not able to find any \emph{a priori} relationship between the two removal thresholds and the chromatic or homomorphism thresholds. Moreover, such a relationship does not hold in general. For instance, Thomassen \cite{MR2321926} proved that $\delta_\chi(C_{2k+1}) = 0$ for all $k\geq 2$, while \cref{thm:odd-cycle} shows that $\linrem(C_{2k+1})\geq \polyrem(C_{2k+1})\geq \frac{1}{2k+1}$. Thus, the chromatic threshold of $C_{2k+1}$ is different from both removal thresholds. The homomorphism threshold of $C_{2k+1}$ is unknown for any $k\geq 2$, though Letzter--Snyder \cite{MR3879964} and Ebsen--Schacht \cite{MR4078811} proved that $\delta_{\hom}(C_{2k+1}) \leq \frac 1{2k+1}$, and that $\delta_{\hom}(\{C_3,C_5,\dotsc,C_{2k+1}\}) = \frac 1{2k+1}$. It would be very interesting to determine if in fact $\delta_{\hom}(C_{2k+1}) = \frac 1{2k+1}$, as well as what the values of $\polyrem(C_{2k+1})$ and $\linrem(C_{2k+1})$ are. As a first pass, we make the following conjecture.
\begin{conj}
	$\polyrem(C_{2k+1})>\delta_{\hom}(C_{2k+1})$ for all $k \geq 2$. 
\end{conj}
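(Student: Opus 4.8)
The plan is to derive the conjecture from the bounds already in hand. On the one side, \cref{thm:odd-cycle} gives $\polyrem(C_{2k+1}) \ge \tfrac{1}{2k+1}$; on the other, the theorem of Letzter--Snyder \cite{MR3879964} and Ebsen--Schacht \cite{MR4078811} gives $\delta_{\hom}(C_{2k+1}) \le \tfrac{1}{2k+1}$. Thus it suffices to upgrade one of these inequalities to a strict one. In principle either end could be pushed, but since the prevailing expectation is that $\delta_{\hom}(C_{2k+1}) = \tfrac{1}{2k+1}$ -- this is exactly the value of $\delta_{\hom}(\{C_3,\dots,C_{2k+1}\})$, and no construction is known beating $\tfrac1{2k+1}$ for the single graph $C_{2k+1}$ -- I would concentrate on the removal side and aim to prove $\polyrem(C_{2k+1}) > \tfrac{1}{2k+1}$.

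To that end I would revisit the construction behind \cref{thm:odd-cycle}, which has the shape \emph{benign host plus bad gadget}: one takes Alon's graph $H$ (super-polynomially few copies of $C_{2k+1}$, but $\Omega(m^2)$ edges must be removed) and glues it, along its $2k+1$ layers, into a blowup $\Gamma$ of the path $P_{2k+1}$, joining the $i$th layer of $H$ completely to the $i$th part of $\Gamma$; a winding-number argument using that $\Gamma$ is bipartite shows every $C_{2k+1}$ of the result lives inside $H$. The minimum degree comes out to only $\bigl(\tfrac1{2k+1}-o(1)\bigr)n$ because the bottleneck vertices are those of $H$, whose degree is essentially the size $\tfrac{n}{2k+1}$ of the single part of $\Gamma$ shadowing their layer. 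The goal is therefore to replace $\Gamma$ by a denser host: a graph that is itself $C_{2k+1}$-free, occupies almost all $n$ vertices, still localizes every $C_{2k+1}$ of the glued graph to $H$, and yet gives each vertex of $H$ more than $\tfrac{n}{2k+1}$ neighbours. Natural candidates to test are the blowup of a longer odd cycle $C_{2\ell+1}$ with $k<\ell\le 2k$ (whose blowup has odd girth $2\ell+1>2k+1$, so it is $C_{2k+1}$-free, and into which $H$ could be glued along a $(2k+1)$-vertex sub-path), or hosts in which each layer of $H$ is joined to several host parts in a way that preserves a proper $2$-colouring away from $H$.

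The main obstacle is that these requirements pull against each other. Enlarging the shadow of a layer of $H$ tends either to create a short odd cycle -- e.g.\ a triangle through two consecutive host parts and a gadget vertex, whose blowup already contains $C_{2k+1}$ -- or to force the host into more parts, shrinking each one; in the $C_{2\ell+1}$-blowup attempt, for instance, the host parts have size only $\tfrac{n}{2\ell+1}<\tfrac{n}{2k+1}$, so the gadget vertices end up with \emph{smaller} degree and the gain evaporates. Conversely, once the host is not a forest, the ``every $C_{2k+1}$ lives in $H$'' step is no longer automatic: one must redo the parity/winding bookkeeping for the combined graph and rule out cycles that splice a partial path through $H$'s layers with the new host edges (or settle for a host contributing only a negligible, easily deleted set of copies of $C_{2k+1}$). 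Finding a host that simultaneously localizes $C_{2k+1}$ and genuinely boosts the degree of the gadget vertices is the crux, and I expect it to need something subtler than a single blown-up cycle. Finally, it remains genuinely possible that $\tfrac1{2k+1}$ is the true value of \emph{both} thresholds, making the conjecture false; since ``bad gadget in a benign host'' is essentially our only tool for lower-bounding $\polyrem(C_{2k+1})$, deciding between these is precisely the content of the problem.
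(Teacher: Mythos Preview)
This statement is presented in the paper as an open \emph{conjecture}, not a theorem; the paper offers no proof, only the surrounding context (the lower bound $\polyrem(C_{2k+1})\geq\tfrac1{2k+1}$ from \cref{thm:odd-cycle}, the upper bound $\delta_{\hom}(C_{2k+1})\leq\tfrac1{2k+1}$ of Letzter--Snyder and Ebsen--Schacht, and the remark that the exact value of $\delta_{\hom}(C_{2k+1})$ is unknown). So there is nothing on the paper's side to compare your argument against.

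On your side, what you have written is not a proof either, and to your credit you do not pretend otherwise: it is a research plan that correctly summarizes the known inequalities, correctly identifies that a strict inequality on either side would suffice, and proposes to push the removal side by replacing the path-blowup host $\Gamma$ in the construction of \cref{thm:odd-cycle} with something denser. Your diagnosis of the obstacles is sound --- enlarging the shadow of a gadget layer tends either to create short odd cycles or to dilute the part sizes, and the $C_{2\ell+1}$-blowup candidate fails for exactly the reason you give. Your closing caveat, that both thresholds might equal $\tfrac1{2k+1}$ and the conjecture might be false, is also appropriate and matches the paper's own hedging (``It would be very interesting to determine if in fact $\delta_{\hom}(C_{2k+1})=\tfrac1{2k+1}$'').

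In short: there is no genuine gap to name because there is no claimed proof, only an honest outline of where the difficulty lies. Your understanding of the landscape is accurate and consistent with the paper, but the conjecture remains open.
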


Odd cycles provide an example of graphs where the polynomial removal threshold is strictly larger than the chromatic threshold. In the other direction, we see from \cref{prop:core-invariance} that $\polyrem(K_3[2])=\polyrem(K_3)=\frac 13$, while it is well-known that $\delta_\chi(K_3[2]) = \frac 12$ (see \cite{MR3010059}, where this is stated as a folklore result). Thus, $K_3[2]$ (or more generally a non-trivial balanced blowup of a clique) is an example of a graph whose polynomial removal threshold is strictly smaller than its chromatic threshold. However, in this case, we have the curious situation that $\linrem(K_3[2])=\delta_\chi(K_3[2])$, by \cref{prop:complete-multipartite}. We conjecture that this is a coincidence, and that in general, the four thresholds have nothing to do with one another.
\begin{conj}
	There exists a graph $H$ for which $\delta_\chi(H), \delta_{\hom}(H), \linrem(H), \polyrem(H)$ are all distinct. 

	More generally, the numbers $\delta_\chi(H), \delta_{\hom}(H), \linrem(H), \polyrem(H)$ may appear in any order in $[0,1]$, subject to the constraints $\delta_\chi(H)\leq \delta_{\hom}(H)$ and $\polyrem(H)\leq \linrem(H)$. 
\end{conj}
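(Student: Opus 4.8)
The plan is to prove the conjecture constructively: for each pattern of strict inequalities among $\delta_\chi(H),\delta_{\hom}(H),\linrem(H),\polyrem(H)$ that is consistent with $\delta_\chi(H)\le\delta_{\hom}(H)$ and $\polyrem(H)\le\linrem(H)$, exhibit a graph $H$ realizing it, and in particular one graph realizing the pattern in which all four are distinct. The building blocks are exactly the phenomena already isolated in \cref{sec:conclusion}: by \cref{prop:core-invariance}, passing to a blowup (or any operation preserving the core up to homomorphic equivalence) leaves $\polyrem$ unchanged, while \cref{prop:complete-multipartite} shows that such an operation can push $\linrem$ all the way up to the Tur\'an density; by Thomassen's theorem together with \cref{thm:odd-cycle}, a long odd cycle $C_{2k+1}$ (with $k\ge 2$) has $\delta_\chi=0$ but $\polyrem\ge\frac1{2k+1}$; and joining with a clique $K_t$ shifts each threshold in a controlled way. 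The first step is therefore to establish a small ``calculus'' recording how each of the four thresholds behaves under blowup, disjoint union, and join with a clique, so that the thresholds of a composite graph can be computed (or at least sandwiched) from those of its pieces.

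Concretely, I would first try candidates of the form $C_{2k+1}[s]$ and $K_t+C_{2k+1}$ (or $K_t+C_{2k+1}[s]$). For these, three of the four thresholds are within reach: $\polyrem$ is controlled via the core and \cref{prop:core-invariance} (the core of $C_{2k+1}[s]$ is $C_{2k+1}$), together with an averaging/supersaturation analysis of the join analogous to \cref{thm:above-threshold}; $\linrem$ is bounded above by the Tur\'an density $\pi$ (via the Erd\H os--Simonovits supersaturation argument recalled in \cref{sec:conclusion}) and below by a construction generalizing \cref{lem:below-threshold-r=3}, glued to the rest of the graph the way \cref{thm:below-threshold-extended} glues a triangle construction to a clique; and $\delta_\chi$ can be computed from the behaviour of the chromatic threshold under blowups and joins, using $\delta_\chi(K_r)=\frac{2r-5}{2r-3}$ and Thomassen's $\delta_\chi(C_{2k+1})=0$. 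Checking that the resulting three numbers are pairwise distinct, and that the fourth can be forced to avoid them, is then a matter of choosing $k,s,t$ so the arithmetic separates.

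The main obstacle is $\delta_{\hom}$: there is no analogue of \cref{prop:core-invariance} or of the $\linrem\le\pi$ bound for the homomorphism threshold, and even $\delta_{\hom}(C_5)$ is open, so I expect the honest outcome to be two-tiered. For the first assertion (one $H$ with all four distinct) it should suffice to use only the \emph{inequalities} $\delta_\chi\le\delta_{\hom}$ and the known bound $\delta_{\hom}(C_{2k+1})\le\frac1{2k+1}$ of Letzter--Snyder and Ebsen--Schacht, and to choose the gadget so that the other three thresholds are separated by margins wide enough that $\delta_{\hom}$ must land strictly between two of them wherever in its known interval it falls; a blowup of a sufficiently long odd cycle, possibly joined with a small clique, looks like a plausible witness once the arithmetic is carried out. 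The full ``any order'' statement, however, seems to require genuinely new input --- for instance a matching lower bound $\delta_{\hom}(C_{2k+1})\ge\frac1{2k+1}$, which is itself a well-known open problem --- so I would prove unconditionally only those orderings not needing $\delta_{\hom}$ pinned exactly, and flag the rest as contingent on such results. A secondary but necessary point is to verify that, for the families chosen, no relation beyond the two stated inequalities is hidden, i.e.\ to rule out any accidental coincidence forced by a structural relationship among the four parameters.
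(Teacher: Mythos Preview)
The statement you are attempting to prove is a \emph{conjecture}: the paper does not prove it, and indeed presents it precisely because the authors could not. So there is no ``paper's own proof'' to compare against, and what you have written is a research programme rather than a proof.

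Your programme has a genuine gap even for the first assertion (a single $H$ with all four thresholds distinct). You propose to arrange the other three thresholds so that $\delta_{\hom}$ is forced to be distinct from all of them wherever it lands in its known interval. But for your leading candidate $C_{2k+1}$ (with $k\ge 2$), the known interval for $\delta_{\hom}$ is $[0,\frac{1}{2k+1}]$, and $\delta_\chi(C_{2k+1})=0$ sits exactly at the left endpoint: nothing currently rules out $\delta_{\hom}(C_{2k+1})=0$, so you cannot separate $\delta_{\hom}$ from $\delta_\chi$ by a margin argument. Passing to blowups or joins does not help, because (as you yourself note) there is no analogue of \cref{prop:core-invariance} for $\delta_{\hom}$, and the behaviour of $\delta_{\hom}$ under these operations is not established; your proposed ``calculus'' would itself require new results for the homomorphism threshold. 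Similarly, your plan to pin down $\linrem(C_{2k+1}[s])$ by generalizing \cref{prop:complete-multipartite} is speculative: that proposition relies on the specific structure of Tur\'an graphs and the K\H ov\'ari--S\'os--Tur\'an bound in a way that does not obviously transfer to odd-cycle blowups.

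You correctly identify that the full ``any order'' statement is contingent on open problems such as the exact value of $\delta_{\hom}(C_{2k+1})$; but the same contingency already blocks the weaker first assertion. In short, the conjecture remains open, and your proposal does not close it.
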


To conclude, we remark that Allen, B\"ottcher, Griffiths, Kohayakawa, and Morris \cite{MR3010059} determined $\delta_\chi(H)$ for all graphs $H$. Moreover, they showed that if $\chi(H)=r\geq 3$, then $\delta_\chi(H)$ can take only one of three values, namely
\[
	\delta_\chi(H) \in \left\{ \frac{r-3}{r-2}, \frac{2r-5}{2r-3}, \frac{r-2}{r-1} \right\}.
\]
We are not bold enough to make any specific conjecture about the removal thresholds for arbitrary graphs. But we do leave the following open question, inspired by the Allen--B\"ottcher--Griffiths--Kohayakawa--Morris theorem.
\begin{question}
	Is it the case that for each $r \geq 3$, there exists a finite set $\Delta_r \subset [0,1]$ such that $\polyrem(H) \in \Delta_r$ for every graph $H$ with $\chi(H)=r$? What if we replace $\polyrem(H)$ by $\linrem(H)$, by $\popedge(H)$, or by $\delta_{\hom}(H)$?
\end{question}

\paragraph{Acknowledgments:} We would like to thank the anonymous referees for many helpful comments which greatly improved the presentation of this paper.

\end{document}